\renewcommand{\leq}{\leqslant}
\renewcommand{\geq}{\geqslant}
\def\build#1_#2^#3{\mathrel{
\mathop{\kern 0pt#1}\limits_{#2}^{#3}}}
\theoremstyle{plain}
\newtheorem{theorem}{Theorem}
\newtheorem{corollary}{Corollary}
\newtheorem{lemma}[corollary]{Lemma}
\theoremstyle{definition}
\theoremstyle{remark}
\newcommand{\qua}{\mathbf{q}}
\newcommand{\map}{\mathbf{m}}
\newcommand{\Qua}{\mathcal{Q}}
\newcommand{\Map}{\mathcal{M}}
\newcommand{\ci}{\perp\!\!\!\perp}
\begin{document}
\title{Percolation on uniform infinite planar maps}
\author{Laurent M\'enard, Pierre Nolin}
\date{Universit\'e Paris Ouest, ETH Z\"urich}
\maketitle


\begin{abstract}
We construct the uniform infinite planar map (UIPM), obtained as the $n \to
\infty$ local limit of planar maps with $n$ edges, chosen uniformly at random.
We then describe how the UIPM can be sampled using a ``peeling'' process, in a
similar way as for uniform triangulations. This process allows us to prove that
for bond and site percolation on the UIPM, the percolation thresholds are
$p^{\textrm{bond}}_c=1/2$ and $p^{\textrm{site}}_c=2/3$ respectively. This method also works for other classes of random infinite planar maps, and we show in particular that for bond percolation on the uniform infinite planar quadrangulation, the percolation threshold is $p^{\textrm{bond}}_c=1/3$.
\end{abstract}

\section{Introduction}

\subsection{Background and motivations}

A lot of progress has been made in the past decade toward
the understanding of statistical physics models in dimension $2$. All these
models, when examined at their critical point, share a strong property of
conformal invariance, a property which has been established for a number of
them, in the scaling limit. Without aiming at exhaustivity, let us mention the
Loop-Erased Random Walk \cite{LSW_LERW}, site percolation on the triangular lattice \cite{Sm}, the
Ising model of ferromagnetism \cite{CS2} and its dual FK-Ising representation \cite{Sm3}.
This property leads to a precise description of geometric objects in terms of the Schramm-Loewner Evolution (SLE)
processes introduced in \cite{Sc}, and subsequently studied in a number of papers -- let us mention the groundbreaking works \cite{RS, LSW1, LSW2}, to name but a few.

For percolation in particular, this led to the derivation of the so-called ``arm
exponents'', that describe the probability of observing disjoint long-range
paths: for instance, at criticality, the probability for a given vertex to be
connected to distance $n$ follows a power law: it decays like
$n^{-\alpha'_1+o(1)}$ as $n \to \infty$, with $\alpha'_1=\frac{5}{48}$. Combining this new
understanding with Kesten's scaling relations \cite{Ke87}, one can then describe
the behavior of percolation not only at criticality, but also near criticality,
i.e. through its phase transition. Let us mention in particular that the density
of the infinite connected component decays as $\theta(p) = (p-p_c)^{\beta+o(1)}$
as $p \searrow p_c$, with $\beta = \frac{5}{36}$ \cite{SmW}.

Such exponents had however been predicted much earlier by powerful but non-rigorous
methods, such as quantum gravity. Let us mention in particular the paper \cite{AAD},
where arm exponents in their own right were first
considered and derived. Random graphs have been extensively used in the
statistical physics literature, with a view to analyzing random spatial
processes such as percolation or the Ising model. Studying these models in
random geometries can provide a useful insight on their
behavior on Euclidean lattices such as $\mathbb{Z}^2$ or the triangular lattice.
Once derived the critical exponents in the random
graph setting, the Knizhnik-Polyakov-Zamolodchikov (KPZ) formula \cite{KPZ} predicts what
the values of these exponents are for (regular) Euclidean lattices.

\bigskip

Let us now make a bit more precise what is meant by random geometries. In the following, we consider proper embeddings of finite connected graphs in the sphere $\mathbb{S}^2$, where loops and multiple edges are allowed. A \emph{finite
planar map} is then an equivalence class of such embeddings with respect to
orientation-preserving homeomorphisms of the sphere. A planar map is
\emph{rooted} if it has furthermore a distinguished oriented edge
$\vec{e}=(v_0,v_1)$, which is then called the \emph{root edge} ($v_0$ being the
{root vertex}). Faces of the map are the connected components of the
complement of the union of its edges, and a map is a \emph{$p$-angulation} if
all its faces have degree $p$. In particular, when $p=3$ (resp. $4$), we obtain
triangulations (resp. quadrangulations).

The set of vertices of a given map will always be equipped
with the graph distance. From this point of view, a random planar map can be
considered as a random discrete metric space, giving a precise mathematical
framework for two-dimensional quantum gravity. In particular, it
is believed that random planar maps provide a good approximation for
continuous random surfaces. Recently, Le Gall \cite{LG_bmap} and Miermont
\cite{Mie} proved that random planar $p$-angulations (for $p=3$ or $p \geq
4$ even) properly rescaled converge towards a universal random surface, called
the
Brownian Map, in analogy with the fact that the Brownian motion arises as the
scaling limit of discrete random walks.

In this paper, rather than dealing with continuous scaling limits, we consider
\emph{local limits} of random maps as introduced in \cite{BS}, which is a
natural way to construct random infinite planar graphs.
We define the distance $d$ as: for every pair of finite rooted maps $\mathbf{m},\mathbf{m}'$,
\[
d \left( \mathbf{m}, \mathbf{m}' \right) =  \left( 1 + \sup \left\{ r: \, B_{r}(
\mathbf{m}) = B_{r}(\mathbf{m}') \right\} \right)^{-1}
\]
where, for $r \geqslant 1$, $B_{r}(\mathbf{m})$ is the planar map consisting of all edges of $\mathbf{m}$ that have at least one vertex at distance
strictly smaller than $r$ from the root (and $\sup \emptyset = 0$ by
convention).
We denote by $(\Map,d)$ the completion of the space of all finite rooted maps with
respect to $d$. Elements of $\Map$ that
are not finite maps are called \emph{infinite maps}. Note that one can extend
the function defined for finite maps $\mathbf{m} \mapsto
B_{r}(\mathbf{m})$ to a continuous function $B_{r}$ on $\Map$. The ball
$B_{r}(\mathbf{m})$ can be interpreted in a natural way as the union of the
edges of
$\mathbf{m}$ that have a vertex at distance strictly smaller than $r$ from the
root.

In a pioneering work \cite{AS}, Angel and Schramm constructed the uniform
infinite planar triangulation (UIPT) as the local limit of uniformly distributed large
triangulations. Shortly after, Krikun \cite{Kr} defined
similarily the uniform infinite
planar quadrangulation (UIPQ): if $\qua_n$ is distributed
according to the uniform measure on the set of all rooted quadrangulations with
$n$ faces, then it is proved in \cite{Kr} that the
distribution of $\qua_n$ converges weakly to a probability measure $\tau$ in the
set of all probability measures on infinite quadrangulations: the measure $\tau$
is the law of the UIPQ. Both the UIPT
and the UIPQ have been the focus of numerous works in recent years such as
\cite{An1,CD,CMM,Kri04,LGM,M}, but it is fair to say that they are not yet
fully understood. In this paper, we study in more detail independent percolation on these objects.

\subsection{Organization of the paper and main results}

In Section \ref{main tools}, we remind several important properties of planar
quadrangulations that will be instrumental in the present paper. We also
describe a natural bijection between quadrangulations and planar maps. This
bijection allows one to use properties for quadrangulations in order to study planar maps. In
particular, it provides an easy way to construct the uniform infinite planar map
(UIPM) from the uniform infinite planar quadrangulation (UIPQ). This bijection
also behaves nicely through restrictions. In particular, uniform infinite planar
$p$-angulations could also be constructed in this way.

\bigskip

We then describe in Section \ref{peeling} a ``peeling process'' similar to the process introduced by Angel in \cite{An1} for triangulations. This process offers a useful description of the usual exploration process, that follows the interface between black (occupied) and white (vacant) sites, as a simple Markov chain for which the transition probabilities are known rather explicitly.

\bigskip

In \cite{An1}, Angel used this description to study site percolation on the
uniform infinite planar triangulation (UIPT). The usual planar triangular
lattice has a ``self-matching'' property that suggests that for site percolation on
this lattice, one has $p_c=1/2$, which is a celebrated result of Kesten
\cite{Ke0}. The UIPT is ``stochastically'' self-matching, and it
also holds in this case that $p_c=1/2$, in both annealed and quenched
environments. Similarly, $\mathbb{Z}^2$ has a self-duality property that implies
that $p_c=1/2$ for bond percolation on this lattice (strictly speaking, this is
the actual result proved in \cite{Ke0}). The UIPM happens to be
``stochastically'' self-dual too, and in Section \ref{bond_percolation} we use
the peeling process to prove that $p_c^{\textrm{bond}}=1/2$ a.s. in this case. Before
that, we derive the site percolation threshold on the UIPM in Section \ref{site_percolation}, where we show that
$p_c^{\textrm{site}}=2/3$ a.s. In the last part of Section \ref{bond_percolation}, we explain how the method allows one to compute bond percolation thresholds for other classes of random infinite planar maps. In particular, we show that for bond percolation on the UIPQ, $p_c^{\textrm{bond}} = 1/3$ a.s. The main result of our paper is thus 
the following.

\begin{theorem} \label{main_theorem}
For site and bond percolation on the UIPM, one has, respectively, $p_c^{\textrm{site}}=2/3$ and
$p_c^{\textrm{bond}}=1/2$ almost surely. For bond percolation on the UIPQ, one has $p_c^{\textrm{bond}} = 1/3$ almost surely.
\end{theorem}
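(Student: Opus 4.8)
The plan is to run percolation on top of the peeling process of Section~\ref{peeling}, and to reduce the existence of an infinite cluster to the recurrence versus transience of an explicit one-dimensional Markov chain. I would peel the map along the interface separating the occupied cluster of the root from the vacant region (for the UIPM, via its construction from the UIPQ in Section~\ref{main tools}; for the UIPQ directly). At each step one reveals the face incident to the current frontier edge, and whenever a genuinely new vertex (for site percolation) or a new edge (for bond percolation) is discovered, it is declared occupied with probability $p$ and vacant with probability $1-p$, independently of everything revealed so far. Since the peeling transition probabilities are known explicitly, the only information that must be retained is a finite boundary statistic --- essentially the numbers of occupied and vacant elements currently exposed on the frontier --- which then evolves as a Markov chain.

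Next I would make the two elementary peeling moves explicit. Either the revealed face produces a fresh vertex, so that the exposed frontier grows by one newly coloured element; or it identifies the active vertex with a vertex already lying on the frontier, thereby \emph{swallowing} a finite region, the number $k$ of swallowed boundary vertices being distributed according to the peeling law $q_k$ computed in Section~\ref{peeling}. Combining the independent colouring with the swallowing law yields the increment distribution of the relevant observable --- the signed length of the frontier arc that the interface is currently tracing --- as an explicit function of $p$, and from it a drift $\mu(p)$.

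The critical value is then the solution of $\mu(p)=0$: this is precisely the equation that produces the numbers $2/3$ for site percolation on the UIPM, $1/2$ for bond percolation on the UIPM, and $1/3$ for bond percolation on the UIPQ. Heuristically these thresholds are consistent with the symmetry remarks of the introduction --- the stochastic self-duality of the UIPM pins the bond threshold at the self-dual point $1/2$, and analogous self-matching and duality considerations make the values $2/3$ and $1/3$ natural --- but the rigorous identification comes from the drift. When $\mu(p)>0$ the frontier length tends to infinity, the interface escapes, and the root belongs to an infinite occupied cluster; when $\mu(p)<0$ the interface a.s.\ closes up and the cluster is finite. At the critical value itself the associated mean-zero walk is recurrent, which I would use to rule out an infinite cluster exactly at $p_c$.

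I expect the main obstacle to be twofold. First, one must set up the interface exploration so that the swallowed regions are handled consistently: one has to check that each swallowed region is finite and carries the colour that neither prematurely disconnects the explored cluster nor artificially prolongs the interface, and that after forgetting the swallowed parts the remaining boundary data is genuinely Markovian with the stated increments. Second, the drift computation is an annealed statement, which only yields that the root lies in an infinite cluster with positive probability; to obtain the sharp almost-sure thresholds of the statement I would invoke the ergodicity and tail-triviality of the UIPM and UIPQ measures, which upgrade the positive-probability dichotomy to an almost-sure one and force $p_c$ to be a deterministic constant equal to the value read off from $\mu$.
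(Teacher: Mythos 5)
Your plan follows the same skeleton as the paper: sample percolation along a peeling process of the UIPQ, choose the peeled edge so that the exploration tracks the interface of the root cluster, reduce everything to an explicit boundary statistic, and read off the thresholds $2/3$, $1/2$, $1/3$ from the vanishing of an asymptotic drift (the drift values $q/2-\tfrac13$ for site, and $\tfrac{q}{1-q}-\tfrac12$ for bond on the UIPQ, are exactly what the paper computes, using Corollary~\ref{expectX} and the ternary-tree generating function). Your closing remark about upgrading an annealed positive-probability statement via tail triviality and stationarity of the UIPM/UIPQ is also a legitimate (and in fact needed) ingredient that the paper treats only implicitly.

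The genuine gap is in your supercritical implication: ``when $\mu(p)>0$ the frontier length tends to infinity, the interface escapes.'' This does not follow from the drift of the chain you set up, because the true boundary statistic (the number $B_n$ of black boundary vertices, or $A_n$ of boundary vertices connected to the root) does \emph{not} have the clean increment law you describe: its transitions involve position-dependent truncations such as $\max(B_n-k-1,0)+1$ and $\min(B_n,p-k)$, coming from the fact that the swallowed region may lie on either the occupied or the vacant arc. The tractable chain one extracts from these transitions (the paper's $B'_n$, $A'_n$) only dominates the true statistic \emph{from above}, and only on the percolation event; a negative drift of this upper bound kills percolation, but a positive drift of an upper bound proves nothing. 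Moreover, one cannot repair this by swapping colors, since vacant connectivity is $*$-connectivity, so the roles of occupied and vacant are not exchangeable under $q\leftrightarrow 1-q$ (this is precisely why $p_c^{\textrm{site}}=2/3\neq 1/2$). The missing idea is to run a \emph{second}, complementary one-sided argument for the supercritical phase: track the vacant statistic ($W_n$ for site percolation, the number $U_n$ of undetermined edges for bond percolation on the UIPQ), show that \emph{its} dominating chain has negative drift ($\tfrac13-\tfrac{q}{2}$, resp.\ essentially $\tfrac12-\tfrac{q}{1-q}$), deduce $W_n=O(\ln n)$ via an exponential Chebyshev/Borel--Cantelli estimate, and combine this with $|\partial \qua_n|\approx n^{2/3}$ (Lemma~\ref{peelingasymptotics}, which is also what legitimizes taking the $p\to\infty$ limit in the drift, since the transition probabilities depend on the current perimeter) to conclude $B_n\to\infty$. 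For bond percolation on the UIPM the paper avoids the second computation altogether by invoking stochastic self-duality to transfer the subcritical result at $1-q$ into the supercritical result at $q$. Finally, your proposed treatment of the critical point itself via recurrence of a mean-zero walk is unnecessary for the theorem (the threshold is pinned by the two one-sided statements) and would be delicate anyway, since the increments have heavy tails $q_k\sim ck^{-5/2}$ and the chain is only asymptotically homogeneous.
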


%

\section{Main tools} \label{main tools}

\subsection{Quadrangulations and planar maps}

Recall that a finite planar map is a \emph{quadrangulation} if all its faces
have degree
$4$, that is $4$ adjacent edges. Note that the underlying graph of a
quadrangulation is bipartite. A planar map is a \emph{quadrangulation with a
boundary} or with \emph{holes} if all its faces have degree $4$, except for a
number of distinguished
faces which can be arbitrary even-sided polygons (we assume these boundaries
to be simple, i.e. the polygons are not ``folded''). In the case when there is only one hole, of
perimeter $2p$, we obtain what is called a quadrangulation of the
$2p$-gon. For every integer $n \geqslant
0$, we denote by $\Qua_n$ the set of all rooted quadrangulations with $n$
faces. We also denote by $\Qua_{n}^p$ the set of all quadrangulations of the
$2p$-gon with $n$ inner faces, such that the external face contains the root edge and lies on the right-hand side of it. The completion of
the set $\Qua_f$ of all
rooted finite
quadrangulations for the distance $d$ (defined in the introduction) is denoted by $\Qua$; it is a
subset of $\Map$. Elements of $\Qua_{\infty} = \Qua
\setminus \Qua_f$ are called infinite rooted quadrangulations. Similarily, we
denote the set of finite (resp. infinite) quadrangulations of the $2p$-gon by
$\Qua_{f}^p$ (resp. $\Qua_{\infty}^p$). We refer to \cite{CMM} for more details.

For our purpose, when dealing with quadrangulations, it turns out to be more convenient to
work with faces rather than with edges, which leads us to introduce the new distance
$d^{\star}$ on $\Map$ defined by
\[
d^{\star} \left( \mathbf{m}, \mathbf{m}' \right) =  \left( 1 + \sup \left\{ r:
\, B^{\star}_{r}( \mathbf{m}) = B^{\star}_{r}(\mathbf{m}') \right\} \right)^{-1}
\]
for all rooted maps $\mathbf{m}$, $\mathbf{m}'$, where, for $r \geqslant
1$, we denote by $B^{\star}_{r}(\mathbf{m})$ the planar map obtained as the union of all
faces of $\mathbf{m}$ that have at least one vertex at distance strictly smaller
than $r$ from the root. Note that if $\mathbf{m}$ is a
quadrangulation, then $B^{\star}_{r}(\mathbf{m})$ is a quadrangulation with
holes.

\bigskip

Let us stress that if maps with faces of
arbitrarily large degrees are considered, then the distances
$d$ and $d^{\star}$ give rise to two different topologies. However, when restricted to quadrangulations, the two distances are
equivalent (more generally, this holds true for closed sets of maps with faces
of bounded degree). Indeed, for every $\mathbf{q} \in \Qua$ and $r \geq 1$, one
has
$$B_r(\mathbf{q}) \subset B^{\star}_r(\mathbf{q}) \subset B_{r+2}(\mathbf{q}).$$
Therefore, $\Qua$ can also be seen as the completion of $\Qua_f$ for the distance $d^{\star}$.

\bigskip

If we are given a quadrangulation with holes, it is natural to construct a full
quadrangulation of the sphere by filling its holes of degree $2p$ with
quadrangulations of the $2p$-gon. However, one has to make sure that filling
the holes with different quadrangulations leads to different maps. This can be
ensured by dealing only with \emph{rigid} quadrangulations, as in \cite{AS}: we say that a
rooted quadrangulation with holes $\qua$ is rigid if no quadrangulation
of the sphere  includes two different copies of $\qua$ with coinciding roots. As
stated in \cite{BC}, an easy adaptation of Lemma 4.8 of \cite{AS}
yields that any rooted quadrangulation with holes is rigid.

\subsection{UIPQ and UIPM}
\label{bijection}

As we already mentioned, the law of the UIPQ can be constructed as the weak local limit of
uniform measures on large quadrangulations.
Recently, Curien and Miermont \cite{CM} constructed similar measures for quadrangulations with a boundary. More precicely, if
$\qua^p_n$ is distributed according to the uniform measure on $\Qua_n^p$,
then the distribution of $\qua^p_n$ converges
weakly to a probability measure $\tau^p$ in the set of all probability measures
on $\left( \Qua^p, d^{\star} \right)$: the measure $\tau^p$ is the law of the
uniform
infinite planar quadrangulation of the $2p$-gon.

\bigskip

There is a natural bijection between rooted quadrangulations and rooted planar maps, which we now describe (see Figure \ref{bijection_fig}).
Starting from a quadrangulation, its bipartite structure allows one to divide its
set of vertices into two sets: \emph{circle-vertices} are the vertices which
are at an even distance from the root vertex (including the
root vertex itself), and \emph{square-vertices} are the vertices at an odd
distance. Now, draw an edge between any two
circle-vertices on the same face: we produce in this way a planar map with $n$
edges, rooted at the edge corresponding to the face (in the initial quadrangulation) which is on the left hand-side of the root
edge. Making explicit the reverse map is
straightforward: it suffices to add one square-vertex on each face, and connect
it to all vertices of this face. This bijection is used throughout the paper.

\begin{figure}[ht!]
\begin{center}
\includegraphics[width=13cm]{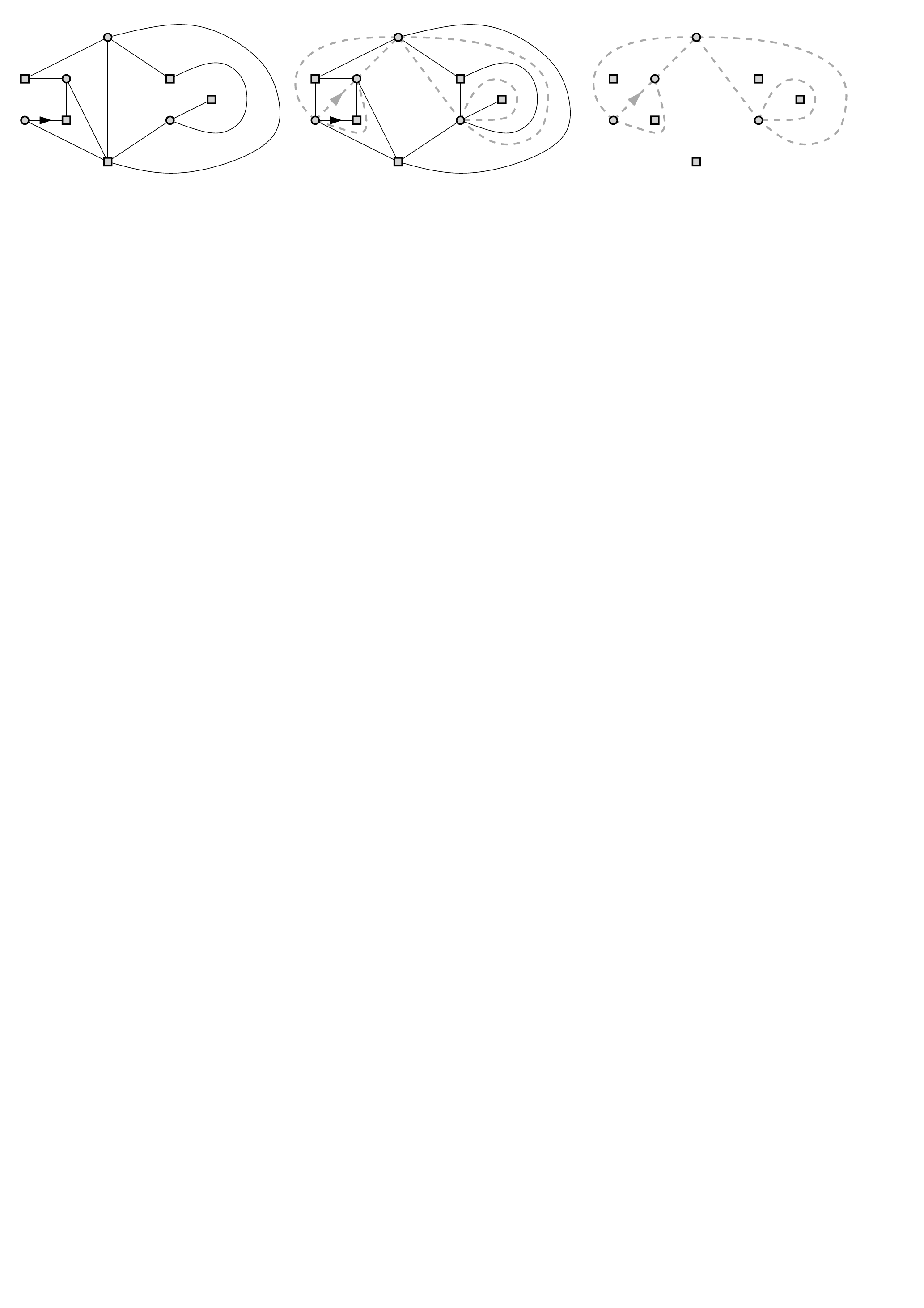}
\caption{\label{bijection_fig}The bijection between quadrangulations and planar
maps.}
\end{center}
\end{figure}

This bijection maps the uniform measure on rooted quadrangulations with $n$
faces to the uniform measure on rooted planar maps with $n$ edges. Therefore,
it can be used to define a (random) uniform infinite planar map (UIPM), whose law is
just the weak limit of the uniform measure on rooted planar maps with $n$ edges for
the distance $d$. Indeed, it is easy to check that this bijection is
continuous for the topologies considered.

Note also that this bijection maps the circle-vertices onto the vertices of the
final map, while the square-vertices are mapped to faces. The dual graph of
the random map can thus be obtained by simply choosing to draw edges between
square-vertices, instead of between circle-vertices. This also corresponds
to re-rooting the original quadrangulation by reversing orientation of the root
edge.

The planar map so obtained is thus ``stochastically'' self-dual (because the
uniform infinite quadrangulation is invariant under the previous re-rooting,
or because the dual of a random uniform planar map with $n$ edges has the same
law), which seems to indicate that the bond percolation threshold on this map is $p_c=1/2$, as in the case of $\mathbb{Z}^2$ (see \cite{Ke0}) which is ``truly'' self-dual.

\subsection{Counting quadrangulations}
\label{combinatorics}

In this short section, we collect some enumeration results for quadrangulations that are
instrumental for our purpose. We refer the reader to \cite{BG} for proofs.

If we denote by $a_{n,p}$ the number of quadrangulations of the $2p$-gon with $n$
internal faces rooted on the boundary face, one has
\begin{equation} \label{counting_quad}
a_{n,p} = 3^n \frac{(2p)!}{p! (p-1)!} \frac{(2n+p-1)!}{n!
(n+p+1)!}.
\end{equation}
Actually, the exact value of $a_{n,p}$ is not needed, but only its asymptotic behavior:
\begin{align}
a_{n,p} & \underset{n \to \infty}{\sim} C_p 12^n n^{-5/2}, \label{asymptotic_behavior1}\\
\text{ with } C_p & = \frac{1}{2 \sqrt{\pi}}\left( \frac{2}{3}\right)^p
\frac{(3p)!}{p!(2p-1)!}.
\end{align}
We also need asymptotic expressions for the corresponding generating functions:
$$Z_p(t) := \sum_{n \geqslant 0} a_{n,p} t^n$$
has $1/12$ as a convergence radius, and
\begin{equation} \label{Z_p}
Z_p := Z_p(1/12) = 2 \left(\frac{2}{3} \right)^p \frac{(3p-3)!}{p! (2p-1)!}.
\end{equation}

Following \cite{AS,BC}, we define the free distribution on rooted
quadrangulations of a $2p$-gon as the probability measure $\mu^p$
that assigns the weight
\[
\mu^p(\qua) = \frac{12^{-n}}{Z_p(1/12)}
\]
to each quadrangulation $\qua$ of the $2p$-gon having $n$ internal
faces and rooted on its boundary face.

\subsection{Spatial Markov property for the UIPQ}

We now state the spatial Markov property of the UIPQ, grouping into a unique lemma all the properties that are needed. 

\begin{lemma}
Let us denote by $\qua_{\infty}$ the UIPQ, and let $\mathbf{q}$ be a rigid quadrangulation with
$n$ internal faces and $k$ boundary faces, with perimeters $2p_1, \ldots , 2p_k$.

\begin{itemize}
\item[(i)] One has
\begin{equation}
\tau \left( \mathbf{q} \subset \mathbf{q}_{\infty} \right) = \frac{12^{-n}}{C_1}
\left(\prod_{i = 1}^k Z_{p_i} \right) \sum_{i=1}^{k}
\frac{C_{p_i}}{Z_{p_i}}.
\end{equation}

\noindent When $\mathbf{q} \subset \mathbf{q}_{\infty}$ holds, let us denote by $\mathbf{q}_i$ the component of the UIPQ in the $i$'th face.

\item[(ii)] Almost surely, only one of these components is infinite: the
probability that it is $\mathbf{q}_j$ is given by the $j$'th term in the
previous sum, i.e.
\begin{equation} \label{spatial_Markov2}
\tau \left( \mathbf{q} \subset \mathbf{q}_{\infty}, \text{$\mathbf{q}_j$ is infinite} \right) = \frac{12^{-n}}{C_1} C_{p_j} \left(\prod_{\substack{i=1\\ i \neq j}}^k Z_{p_i}
\right).
\end{equation}

\item[(iii)] If we condition on the event that $\left\{ \mathbf{q} \subset \mathbf{q}_{\infty}
\right\}$, and that the external faces of $\mathbf{q}$ all contain finitely many vertices of $\mathbf{q_\infty}$, except (possibly) the $j$'th
one, then
\begin{itemize}
 \item the quadrangulations $(\mathbf{q}_i)_{1 \leq i \leq k}$ are independent,
 \item $\mathbf{q}_j$ has the same distribution as the UIPQ of the $2 p_j$-gon,
 \item and for $i \neq j$, $\mathbf{q}_i$ is distributed as the free
quadrangulation
of a $2 p_i$-gon.
\end{itemize}

\end{itemize}

\end{lemma}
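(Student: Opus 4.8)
The plan is to establish the three parts of the spatial Markov property by exploiting the enumeration results from Section~\ref{combinatorics} together with the weak convergence of uniform measures to the law $\tau$ of the UIPQ. The essential starting point is to compute, for finite quadrangulations, the probability that $\mathbf{q}_n$ (uniform on $\Qua_n$) contains a fixed rigid quadrangulation $\mathbf{q}$ with holes, and then to pass to the limit $n \to \infty$. Because $\mathbf{q}$ is rigid, the event $\{\mathbf{q} \subset \mathbf{q}_n\}$ is unambiguous: there is a well-defined way of counting the maps that contain $\mathbf{q}$, namely by choosing, independently in each of the $k$ holes, a quadrangulation of the corresponding $2p_i$-gon, subject to the total number of internal faces summing to $n - n_0$ (where $n_0$ is the number of internal faces of $\mathbf{q}$ itself).

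First I would write the finite-$n$ probability as a ratio of enumeration quantities: the numerator is $12^{-n_0}$ times a convolution $\sum a_{m_1,p_1}\cdots a_{m_k,p_k}$ over all ways of distributing the remaining faces among the holes, and the denominator is the total count $\#\Qua_n \sim c\, 12^n n^{-5/2}$. The key analytic step is to extract the asymptotics of the $k$-fold convolution as $n \to \infty$. Using the single-variable asymptotics $a_{m,p}\sim C_p 12^m m^{-5/2}$ from \eqref{asymptotic_behavior1}, a standard Laplace-type or ``one large summand'' argument shows that the dominant contribution comes from configurations where \emph{exactly one} hole, say the $j$'th, absorbs almost all of the faces (so $m_j \approx n$) while the others remain of bounded size; the surviving factor is then $C_{p_j} 12^{m_j} m_j^{-5/2}$ times $\prod_{i\neq j} Z_{p_i}$ where $Z_{p_i}=\sum_m a_{m,p_i}12^{-m}$ sums over the finite holes. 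Summing over the choice of the infinite hole $j$ produces the factor $\sum_{i} C_{p_i}/Z_{p_i}$ after normalizing, which yields (i). This same computation, restricted to the term where $j$ is the distinguished infinite hole, gives (ii) directly, as the $j$'th summand.

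For part (iii), I would read off the conditional law from the structure of the asymptotic decomposition. Conditioning on $\{\mathbf{q}\subset\mathbf{q}_\infty\}$ and on which hole is infinite factorizes the finite-$n$ measure across the $k$ holes (the convolution becomes a product in the limit), giving independence. In the finite holes $i\neq j$, the conditional weight of a filling with $m_i$ faces is proportional to $12^{-m_i}/Z_{p_i}(1/12)$, which is exactly the free distribution $\mu^{p_i}$ defined in Section~\ref{combinatorics}. In the infinite hole $j$, the conditional law of the filling, once renormalized, converges to the local limit of uniform quadrangulations of the $2p_j$-gon with $m_j\to\infty$ internal faces, which by the Curien--Miermont result \cite{CM} is precisely $\tau^{p_j}$, the law of the UIPQ of the $2p_j$-gon. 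Care must be taken to justify interchanging the limit with the conditioning, i.e.\ that the weak convergence $\mathbf{q}_n\to\mathbf{q}_\infty$ is compatible with these cylinder-event computations; this is where rigidity is again used to ensure the inclusion events are closed/open in the right sense.

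The main obstacle I anticipate is the convolution asymptotics that underlie the ``one hole is infinite'' dichotomy: one must rigorously control the $k$-fold sum $\sum_{m_1+\cdots+m_k=N} \prod_i a_{m_i,p_i}$ and show that, after dividing by $12^N$, the mass concentrates on configurations with a single large index. The polynomial correction $m^{-5/2}$ is summable, so $\sum_m a_{m,p}12^{-m}=Z_p<\infty$ converges, and this is exactly what makes the finite holes contribute only bounded-size fillings while one hole carries the $n^{-5/2}$ tail; making this precise (for instance via a dominated-convergence argument on the normalized convolution, handling the error terms in \eqref{asymptotic_behavior1} uniformly) is the technical heart of the proof. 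Everything else -- the algebraic identities and the identification of the limiting laws -- follows once this concentration estimate is in place.
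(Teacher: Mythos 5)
Your proposal is correct and is essentially the same argument the paper relies on: the paper gives no written proof but simply invokes the Angel--Schramm proof for triangulations in \cite{AS}, and your enumeration-based sketch --- counting fillings of the holes of the rigid map $\mathbf{q}$, extracting the asymptotics of the $k$-fold convolution via $a_{m,p}\sim C_p\,12^m m^{-5/2}$ so that exactly one hole absorbs all but finitely many faces, and then reading off the free law $\mu^{p_i}$ in the finite holes and the law $\tau^{p_j}$ of the UIPQ of the $2p_j$-gon in the infinite one --- is precisely that proof transposed to quadrangulations, with rigidity used exactly where it should be to make the counting unambiguous. No gaps to flag beyond the concentration estimate you already identify as the technical heart, which is the same step carried out in \cite{AS}.
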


This spatial Markov property is proved in \cite{AS} for uniform triangulations, and a strictly identical proof applies in our
setting of quadrangulations.

\section{Peeling process for quadrangulations} \label{peeling}

We now describe the peeling process, a growth process that can be used to sample planar maps. It has first been
used in physics \cite{ADJ} to derive heuristics for the scaling limit of
2-dimensional quantum gravity. Later, Angel \cite{An1} defined rigourously this
process for triangulations, and used it to study volume growth and site
percolation on the UIPT. Benjamini and Curien \cite{BC} adapted this process to
quadrangulations in order to prove that the simple random walk on the UIPQ is
subdiffusive. We will make extensive use of this process to study both site
and bond percolation on the UIPM associated with the UIPQ.

\bigskip

Let $\qua_{\infty}$ be the UIPQ. The peeling process is a sequence $\left(
\qua_n \right)_{n \geqslant 0}$ of (finite) random quadrangulations with simple
boundary, such that:
\begin{itemize}
 \item $\qua_0$ is the root edge of $\qua_{\infty}$ and one has $\qua_0 \subset
\qua_1 \subset \cdots \subset \qua_n \subset \cdots \subset \qua_{\infty}$.
 \item Let $\mathcal{F}_n$ be the fitration generated by $\qua_0 ,
\qua_1 , \ldots, \qua_n$. Then conditionally on $\mathcal{F}_n$, the part of
$\qua_{\infty}$ that has not been discovered yet, that is $\qua_{\infty} \setminus
\qua_n$, is a UIPQ of the $\left|\partial \qua_n \right|$-gon.
\end{itemize}

Let us now describe the conditional distribution of $\qua_{n+1}$ knowing
$\mathcal{F}_n$, and write down explicit transition probabilities. First, we have to
choose an oriented edge $e$ on $\partial \qua_n$. Any choice, deterministic or
random, is acceptable as long as it depends only on $\mathcal{F}_n$, and
$\qua_n$ lies on the right hand side of $e$. The map
$\qua_{\infty} \setminus
\qua_n$ rooted at $e$ is a UIPQ of the $\left|\partial \qua_n \right|$-gon. Let
$p = \left| \partial \qua_n \right|/2$, and denote the
vertices of $\partial \qua_n$ by $x_1, \ldots, x_{2p}$ so that $e=
(x_{2p},x_1)$ (see Figure \ref{peeling_fig}).
Now, let us
reveal the face of $\qua_{\infty} \setminus
\qua_n$ containing $e$. Following the orientation given by $e$, we denote the
vertices of this face by $(x_{2p}, x_1, y_0, y_1)$. Four cases may occur,
depending on whether $y_0$ and / or $y_1$ belong to $\partial \qua_n$: we now describe
$\qua_{n+1}$ in each case, and give the corresponding probability.

\begin{figure}[ht!]
\begin{center}
\includegraphics[width=13cm]{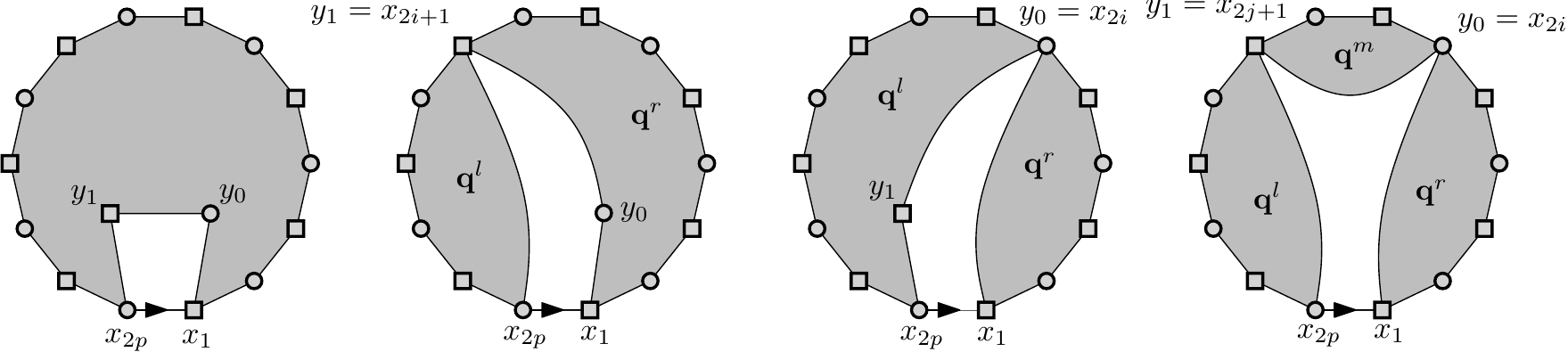}
\caption{\label{peeling_fig}Discovering a new face during the peeling process.
Note that $y_1$ can coincide with $x_1$,
and $y_0$ can coincide with $x_{2p}$ -- in the second and third cases,
respectively.}
\end{center}
\end{figure}

\begin{itemize}
 
\item[(1)] $y_0, y_1 \notin{\partial \qua_n}$ (Figure \ref{peeling_fig}, left). In this case, we set $\qua_{n+1}$ to be the union of $\qua_n$ and the face discovered. Therefore, $\qua_{\infty} \setminus \qua_{n+1}$ is a
quadrangulation of a $2(p+1)$-gon, and the spatial Markov property
ensures that conditionally on this event and $\mathcal{F}_n$, the map
$\qua_{\infty} \setminus \qua_{n+1}$ is a UIPQ of the $2(p+1)$-gon. Hence, conditionally on $\mathcal{F}_n$, this event has probability
\begin{equation*}
\tau \left( y_0, y_1 \notin \partial \qua_n \middle| \mathcal{F}_n \right) =
\tau^p \left( y_0, y_1 \notin \partial \qua^p  \right) =
\lim_{N\to \infty} \frac{a_{N-1,p+1}}{a_{N,p}}= \frac{C_{p+1}}{12 C_p}
\end{equation*}
(using \eqref{asymptotic_behavior1}).

\item[(2)] $y_0 \notin \partial
\qua_n$ and $y_1 = x_{2i +1}$ with $0 \leqslant i \leqslant p-1$ (Figure \ref{peeling_fig}, middle left). 
In this case, the new face divides the remaining part of $\qua_{\infty}$ into two
separate quadrangulations: $\qua_n^r$ with perimeter $2(i + 1)$ and $\qua_n^l$
with perimeter $2(p - i)$. Conditionally on this event and $\mathcal{F}_n$, exactly
one of these two quadrangulations is infinite. If it is $\qua_n^r$, the spatial
Markov property ensures that it is a UIPQ of the $2 (i+1)$-gon,
while $\qua_n^l$ is independent of $\qua_n^r$ and is a free
quadrangulation of the $2 (p-i)$-gon. We set $\qua_{n+1}$ to be the union of
$\qua_n$, the face discovered, and $\qua_n^l$, so that $\qua_{\infty} \setminus
\qua_{n+1} = \qua_n^r$. Using \eqref{spatial_Markov2}, the probability of this event is given by
\begin{equation*}
\tau \left( y_0 \notin \partial \qua_n, y_1  = x_{2i+1}, \qua_n^r \text{
infinite } \middle| \mathcal{F}_n \right) = \frac{Z_{p-i}C_{i+1}}{12 C_p}.
\end{equation*}
If $\qua_n^l$ is infinite, the situation is similar, and we set $\qua_{n+1}$ to be the union of
$\qua_n$, the face discovered, and $\qua_n^r$, so that $\qua_{\infty} \setminus
\qua_{n+1} = \qua_n^l$. The corresponding probability is
\begin{equation*}
\tau \left( y_0 \notin \partial \qua_n, y_1 = x_{2i+1}, \qua_n^l \text{
infinite } \middle| \mathcal{F}_n \right) = \frac{C_{p-i}Z_{i+1}}{12 C_p}.
\end{equation*}

\item[(3)] $y_1 \notin \partial \qua_n$ and $y_0 = x_{2i}$ with $1 \leqslant i \leqslant p$ (Figure \ref{peeling_fig}, middle right). The situation is similar to the second case, and conditionally on this event and $\mathcal{F}_n$, either
$\qua_n^r = \qua_{\infty} \setminus \qua_{n+1}$ is a UIPQ of the $2i$-gon, or
$\qua_n^l = \qua_{\infty} \setminus \qua_{n+1}$ is a UIPQ of the
$2(p+1-i)$-gon. The respective probabilities are:
\begin{align*}
\tau \left( y_1 \notin \partial \qua_n, y_0 = x_{2i}, \qua_n^r \text{
infinite } \middle| \mathcal{F}_n \right) 
& = \frac{Z_{p+1-i}C_{i}}{12 C_p}, \\
\tau \left( y_1 \notin \partial \qua_n, y_0 = x_{2i}, \qua_n^l \text{
infinite } \middle| \mathcal{F}_n \right) 
& = \frac{C_{p+1-i}Z_{i}}{12 C_p}.
\end{align*}

\item[(4)] $y_0 = x_{2i}$ and $y_1 =
x_{2j+1}$ with $1 \leqslant i \leqslant j \leqslant p-1$ (Figure \ref{peeling_fig}, right). In this case, the new face divides
the remaining part of $\qua_{\infty}$ into three
separate quadrangulations: $\qua_n^r$ with perimeter $2i$, $\qua_n^m$ with perimeter $2(j-i +1)$, and $\qua_n^l$
with perimeter $2(p - j)$.
Here again, the spatial Markov property ensures that conditionally on the corresponding event and
$\mathcal{F}_n$, exactly one of these quadrangulations is infinite, and we set
$\qua_{n+1}$ to be the union of $\qua_n$, the face discovered, and the other two finite
quadrangulations. The corresponding probabilities are
given by:
\begin{align*}
\tau \left( y_0 = x_{2i} , y_1 = x_{2j+1}, \qua_n^r \text{
infinite } \middle| \mathcal{F}_n \right) 
& = \frac{Z_{p-j}Z_{j-i+1}C_{i}}{12 C_p},\\
\tau \left( y_0 = x_{2i} , y_1 = x_{2j+1}, \qua_n^m \text{
infinite } \middle| \mathcal{F}_n \right) 
& = \frac{Z_{p-j}C_{j-i+1}Z_{i}}{12 C_p},\\
\tau \left( y_0 = x_{2i} , y_1 = x_{2j+1}, \qua_n^l \text{
infinite } \middle| \mathcal{F}_n \right) 
& = \frac{C_{p-j}Z_{j-i+1}Z_{i}}{12 C_p}.
\end{align*}
\end{itemize}

\bigskip

Let us insist on the fact that the peeling procedure that we just described, and its
transition probabilities, do not depend on the choice of the edge $e$, provided that at each
step $n$, this choice depends only on $\mathcal{F}_n$. This will allow us to study both site and bond percolation on the UIPM by
following the percolation interface along the way. This peeling procedure also allowed
Benjamini and Curien \cite{BC} to study the simple random walk on the UIPQ.

A more straightforward yet very useful consequence of this fact is that the
sequence $\left( |\partial \qua_n | , | \qua_n | \right)_{n \geqslant 0}$ is a
homogeneous Markov chain whose transition probabilities do not depend on the
particular peeling process performed. For instance, let us write $|\partial \qua_{n + 1}
| = | \partial \qua_n | + 2 X_n$ for every $n \geqslant 0$. Then one has, for this increment $X_n$, using the transition probabilities of the peeling process that we derived explicitly,
\begin{equation} \label{stepsize1}
P \left( X_n = 1 \big| |\partial \qua_n | = 2p \right) = \frac{C_{p+1}}{12 C_p}
\end{equation}
(corresponding to case (1) above), and for every $k = 0, \ldots, p-1$,
\begin{equation} \label{stepsize2}
P \left( X_n = -k \big| |\partial \qua_n | = 2p \right) = 4 \frac{C_{p- k} Z_{k+1}}{12 C_p} + 3 \frac{C_{p- k}}{12 C_p} \sum_{i = 1}^k
Z_i Z_{k+1 -i}
\end{equation}
(combining cases (2) and (3) for the first term, and (4) for the second term). Of particular
interest is the following asymptotics proven in Theorem 5 of \cite{BC}:
\begin{lemma}
\label{peelingasymptotics}
If $\qua_0, \qua_1 , \ldots , \qua_n, \ldots$ is generated by a peeling procedure of the
UIPQ, then one has
\begin{align*}
|\partial \qua_n | & \approx n^{2/3}, \\
|\qua_n| & \approx n^{4/3},
\end{align*}
where, if $(Y_n)_{n \geqslant 0}$ is a random
process, $Y_n \approx n^{\alpha}$ means that for some $\kappa > 0$, $Y_n n^{-\alpha} \log^{\kappa} (n) \to \infty$ and $Y_n
n^{-\alpha} \log^{- \kappa} (n) \to 0$ almost surely.
\end{lemma}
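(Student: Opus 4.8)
The plan is to analyze the one-dimensional Markov chain $(|\partial \qua_n|)_{n \geq 0}$ via the increments $X_n$ introduced just before the statement, and deduce the behavior of $|\qua_n|$ as a sum of its own increments. First I would establish the asymptotic drift and tail behavior of $X_n$ given $|\partial \qua_n| = 2p$, using the explicit transition probabilities \eqref{stepsize1} and \eqref{stepsize2} together with the asymptotics $a_{n,p} \sim C_p 12^n n^{-5/2}$ and the closed form \eqref{Z_p} for $Z_p$. Plugging in the ratios $C_{p+1}/C_p$ and $Z_{k+1}/C_p$ and expanding for large $p$, I expect to find that $\E[X_n \mid |\partial \qua_n| = 2p] \to 0$ as $p \to \infty$, so that the perimeter process is (asymptotically) a centered random walk, but with step distribution having a heavy polynomial tail: from \eqref{asymptotic_behavior1} and \eqref{Z_p} the probability of a downward jump of size $k$ should decay like $k^{-5/2}$, placing the increments in the domain of attraction of a spectrally negative stable law of index $3/2$.

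Given this, the natural strategy is to compare $|\partial \qua_n|$ with a stable process of index $3/2$. Because the typical jump scale is governed by the current perimeter and the tail exponent is $3/2$, a standard scaling heuristic predicts that after $n$ steps the perimeter has grown to order $n^{1/(3/2)} = n^{2/3}$, which matches the claimed exponent. To upgrade a scaling-limit heuristic to the almost-sure statement $|\partial \qua_n| \approx n^{2/3}$ (i.e. up to logarithmic corrections in the precise sense defined in the lemma), I would prove matching upper and lower bounds. For the upper bound one can use a maximal inequality or a supermartingale argument controlling $\P{|\partial \qua_n| \geq n^{2/3} \log^\kappa n}$ and then apply Borel--Cantelli along a geometric subsequence, interpolating within blocks using monotonicity of the discovered volume and a crude control on the maximal jump. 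For the lower bound, the difficulty is that the chain could in principle dip low; here I would use that conditioned on survival the perimeter is genuinely growing, controlling the probability of $|\partial \qua_n|$ staying below $n^{2/3} \log^{-\kappa} n$ via the stable-domain tail estimates. The volume asymptotics $|\qua_n| \approx n^{4/3}$ would then follow: each peeling step adds a face plus a swallowed finite quadrangulation, and summing the volume increments against the perimeter process gives $|\qua_n| \approx \sum_{k \leq n} |\partial \qua_k| \approx \sum_{k \leq n} k^{2/3} \approx n^{5/3}$ if increments were deterministic---so I would instead track the actual law of the swallowed volume, whose typical size scales like the square of the current perimeter, yielding $|\qua_n| \approx n^{4/3}$ once the correct scaling is identified.

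The main obstacle I anticipate is making the stable-scaling comparison rigorous in the almost-sure (rather than distributional) sense, since the increment distribution is only asymptotically stable and depends on the current state $p$ through both its tail constant and a state-dependent drift correction. Concretely, the delicate point is controlling the lower tail: ruling out that the perimeter spends long stretches anomalously small requires quantitative estimates on the probability of large negative excursions, which is exactly where the $k^{-5/2}$ tail and the precise constants from \eqref{asymptotic_behavior1}--\eqref{Z_p} must be handled carefully rather than to leading order. Since the statement is quoted as Theorem 5 of \cite{BC}, I would for the purposes of this paper invoke that reference directly; were I to reprove it, the technical heart would be the two-sided large-deviation control of the perimeter chain sketched above, with the volume estimate following as a comparatively routine consequence of the perimeter estimate together with the law of the swallowed components.
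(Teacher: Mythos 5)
The paper offers no proof of this lemma at all: it simply quotes Theorem 5 of \cite{BC} (which is proved there via geometric properties of the UIPQ, not via the peeling probabilities), and since you ultimately state that you would invoke that reference directly, your proposal takes exactly the same approach as the paper; your supplementary sketch corresponds to the alternative route the paper only remarks ``should also be possible.'' One caveat on that sketch, in case you ever carry it out: the volume heuristic is garbled as stated, since a swallowed volume of order the \emph{square of the current perimeter at each step} would give $\sum_{k \leq n} (k^{2/3})^2 \approx n^{7/3}$, not $n^{4/3}$; the correct accounting is that the swallowed region is typically of bounded size but has a heavy tail cut off at the current perimeter, so that at perimeter $2p$ with $p \approx k^{2/3}$ its conditional expectation is of order $\sum_{j \leq p} j^{-5/2} \cdot j^{2} \approx p^{1/2} \approx k^{1/3}$, whence $|\qua_n| \approx \sum_{k \leq n} k^{1/3} \approx n^{4/3}$, consistent with the global relation $|\qua_n| \approx |\partial \qua_n|^{2}$.
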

This property is proved in \cite{BC} by using geometric properties of the UIPQ, without appealing to the peeling process directly. However, it should also be possible to prove these
asymptotics by using the explicit transition probabilities for the peeling
process, and the enumeration results of Section \ref{combinatorics}. An easy
consequence of Lemma \ref{peelingasymptotics} -- actually, only the fact that $|\partial \qua_n | \to \infty$ a.s. -- that will be useful for our
purpose is the following:
\begin{corollary}
\label{expectX}
Let $\qua_0, \qua_1 , \ldots , \qua_n, \ldots$ be generated by a peeling procedure of the
UIPQ, and set $|\partial \qua_{n + 1}
| = | \partial \qua_n | + 2 X_n$ for every $n \geqslant 0$. Then one has
\begin{equation*}
 E \left[ X_n \middle| \mathcal{F}_n \right] \underset{n \to \infty}{
\longrightarrow} 0.
\end{equation*}
\end{corollary}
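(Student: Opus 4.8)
The plan is to reduce this almost-sure statement to a deterministic analytic fact and then to establish that fact by an explicit, though delicate, computation.

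First I would observe that, by the homogeneous Markov property emphasised just before the statement, the conditional law of $X_n$ given $\mathcal{F}_n$ depends only on the current half-perimeter $P_n := |\partial \qua_n|/2$, via the transition probabilities \eqref{stepsize1}--\eqref{stepsize2}. Hence $E[X_n \mid \mathcal{F}_n] = g(P_n)$ for the deterministic function
\[
g(p) = \frac{C_{p+1}}{12 C_p} - \sum_{k=1}^{p-1} k \left( \frac{4 C_{p-k} Z_{k+1}}{12 C_p} + \frac{3 C_{p-k}}{12 C_p} \sum_{i=1}^{k} Z_i Z_{k+1-i} \right).
\]
Since Lemma \ref{peelingasymptotics} gives $P_n \to \infty$ almost surely (this is the only feature of the lemma we use), it suffices to prove that $g(p) \to 0$ as $p \to \infty$; the conclusion then follows by composition.

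I would then handle the two parts of $g(p)$ separately. Stirling's formula applied to the explicit expression for $C_p$ from Section \ref{combinatorics} gives $C_p \sim \tfrac{\sqrt{3}}{2\pi}\, p^{1/2} (9/2)^p$, so $C_{p+1}/C_p \to 9/2$ and the positive part tends to $3/8$. For the negative part I would use the pointwise limit $C_{p-k}/C_p \to (2/9)^k$ together with the uniform bound $C_{p-k}/C_p \leq (2/9)^k$, which follows from checking that $m \mapsto C_m/C_{m+1}$ increases to its limit $2/9$. Dominated convergence then identifies the limit of the negative part as $\sum_{k \geq 1} k\, \nu(-k)$, where $\nu(-k) := (2/9)^k \big( \tfrac{1}{3} Z_{k+1} + \tfrac{1}{4} \sum_{i=1}^{k} Z_i Z_{k+1-i} \big) = O(k^{-5/2})$, so the series converges.

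The crux, and the main obstacle, is that this limiting expected decrease equals \emph{exactly} $3/8$, matching the positive part so that the difference $g(p)$ vanishes. No term-by-term smallness is available: the probability of increase and the expected decrease both tend to the same nonzero constant, and only their difference goes to zero, so the two constants must be pinned down precisely. I would introduce $W(x) := \sum_{p \geq 1} Z_p x^p$ and rewrite $\sum_{k \geq 1} k\, \nu(-k)$ as an explicit expression in the two quantities $W(2/9) = \sum_p Z_p (2/9)^p$ and $\sum_p p\, Z_p (2/9)^p = \tfrac{2}{9} W'(2/9)$, reducing the claim to the evaluations $W(2/9) = 1/3$ and $W'(2/9) = 2$ at the radius of convergence $2/9$. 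These follow from the closed form \eqref{Z_p}: writing $Z_p = \tfrac{2}{p} A_{p-1} (2/3)^p$ in terms of the Fuss--Catalan numbers $A_q$, whose generating function $y(v) = \sum_{q \geq 0} A_q v^q$ solves $y = 1 + v y^3$, one obtains the clean identity $W(x) = 2 \int_0^{2x/3} y(w)\, \d w$, hence $W'(x) = \tfrac{4}{3}\, y(2x/3)$. Since $y(4/27) = 3/2$ (solving $y = 1 + \tfrac{4}{27} y^3$) and $\int_0^{4/27} y\, \d w = 1/6$ (via the substitution $v = (y-1)/y^3$), both evaluations follow. Combining, $g(p) \to 3/8 - 3/8 = 0$, and therefore $E[X_n \mid \mathcal{F}_n] = g(P_n) \to 0$ almost surely.
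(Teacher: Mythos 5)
Your proof is correct, and its skeleton is the same as the paper's: both arguments reduce the statement to a deterministic limit in $p$ (using only $|\partial \qua_n| \to \infty$ a.s., which is indeed all that Lemma \ref{peelingasymptotics} provides), both pass to the limit in the sum over $k$ by dominated convergence, and both hinge on the exact evaluation $\sum_{k\geq 1} k q_k = 3/8$ cancelling the limit $3/8$ of $P ( X_n = 1 \mid |\partial \qua_n| = 2p )$. The differences lie in the two technical sub-steps, and in both cases your route is more self-contained. For the domination, the paper rewrites $P ( X_n = -k \mid |\partial \qua_n| = 2p ) = \frac{(p-1/2)_k (p-1)_k}{(p-1/3)_k (p-2/3)_k}\, q_k$ and uses monotonicity in $p$; you instead check that $C_m/C_{m+1} = \frac{m(2m+1)}{(3m+1)(3m+2)}$ increases to $2/9$, giving $C_{p-k}/C_p \leq (2/9)^k$ --- equivalent and equally valid. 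For the crux evaluation, the paper quotes the closed forms of $Z(x)$ and $Z'(x)$ from \cite{BG} (a ${}_2F_1$ and an arcsin expression) and evaluates at $x = 2/9$; you derive the two needed values $Z(2/9) = 1/3$ and $Z'(2/9) = 2$ from scratch via the Fuss--Catalan structure: $Z_p = \frac{2}{p} A_{p-1} (2/3)^p$, the equation $y = 1 + v y^3$ whose relevant root at the critical point $v = 4/27$ is $3/2$ (indeed $4y^3 - 27y + 27 = (2y-3)^2(y+3)$), and the substitution $v = (y-1)/y^3$ giving $\int_0^{4/27} y \, \d w = 1/6$. I verified these computations, as well as the final combination, which in your notation reads $\frac{1}{3} W' - \frac{3}{2} W + \frac{1}{2} W W' - \frac{9}{8} W^2 = \frac{2}{3} - \frac{1}{2} + \frac{1}{3} - \frac{1}{8} = \frac{3}{8}$ at $W = 1/3$, $W' = 2$: all correct, so your approach buys independence from the closed forms cited in \cite{BG} at essentially no extra cost. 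The only point you pass over silently is that the series $y(v)$ converges at its radius $4/27$ and there takes the value of the positive double root (Abel's theorem together with $A_q (4/27)^q = O(q^{-3/2})$, plus positivity of the coefficients to exclude the root $-3$); this is standard and does not affect correctness.
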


\begin{proof}
For $p > 0$ and $1 \leqslant k \leqslant p-1$, one can easily derive from
\eqref{stepsize2}:
\begin{equation*}
P \left( X_n = -k \big| |\partial \qua_n | = 2p \right) =
\frac{\left(p - \frac{1}{2} \right)_k \left(p - 1 \right)_k}{\left(p -
\frac{1}{3} \right)_k \left(p - \frac{2}{3} \right)_k } \, q_k
\end{equation*}
where $(x)_k = x (x-1) \cdots (x-k + 1)$ and
\begin{equation} \label{q_k}
q_k = \frac{1}{3}
Z_{k+1} \left( \frac{2}{9} \right)^k  + \frac{1}{4}
\sum_{i = 1}^k
Z_i Z_{k+1 -i} \left( \frac{2}{9} \right)^k.
\end{equation}
Therefore, the probabilities $P \left( X_n = -k \big| |\partial \qua_n | = 2p
\right)$ are increasing in $p$ and converge to $q_k$. Let us denote by $X$ a
random variable with law given by 
\begin{align*}
P(X = -k) & = q_k, \, \text{for $k \geqslant 1$},\\
P(X= 0) & = P \left( X_n = 0 \big| |\partial \qua_n | = 2p
\right)= \frac{1}{3} Z_1 = 4/9, \\
P(X=1) & = \lim_{p \to
\infty} P \left( X_n = 1 \big| |\partial \qua_n | = 2p
\right) = 3/8.
\end{align*}
Since $|\partial \qua_n| \to \infty$ almost
surely as $n$ grows, an argument of dominated convergence shows that $E
\left[ X_n \middle| \mathcal{F}_n \right]$ converges to $E[X]$.

Now, let us show that $E[X] = 0$. To this aim, we introduce the series
\begin{equation*}
 Z(x) = \sum_{k \geqslant 1} Z_k x^k,
\end{equation*}
with convergence radius $2/9$. The series $Z'(x)$ corresponds to the generating
series of ternary trees, and classical arguments (see \cite{BG}, (5.29)) yield
\begin{align*}
Z(x) & = - \frac{2}{3} + \frac{2}{3} \, _2F_1 \left(-\frac{2}{3} , - \frac{1}{3}
; \frac{1}{2} ; \frac{9x}{2} \right), \\
\text{and} \quad Z'(x) & = 4 \sqrt{\frac{2}{9x}} \sin \left( \frac{1}{3} \arcsin \left(
\sqrt{\frac{9x}{2}} \right) \right).
\end{align*}
From here, basic computations give
\begin{equation*}
\sum_{k \geqslant 1} k q_k  = \frac{1}{3} \left( Z' \left( \frac{2}{9} \right)
- Z_1 \right) + \frac{1}{4} \left( Z^2 \right)' \left( \frac{2}{9} \right) -
\sum_{k \geqslant 1} q_k = \frac{3}{8}.
\end{equation*}
\end{proof}

\bigskip

To conclude this section, let us stress that the peeling procedure for
the UIPQ also provides a sampling of the UIPM. Indeed, consider $\left(
\qua_n \right)_{n \geqslant 0}$ a peeling-generated sequence for the
UIPQ, and, for every $n \geqslant 0$, let $\map_n$ be the map associated with
$\qua_n$ by the bijection of Section \ref{bijection}: there is an edge of
$\map_n$ inside each face of $\qua_n$ (except for the boundary face). We obtain in this way an
increasing sequence of maps, which are all submaps of $\map_{\infty}$.

Note that different quadrangulations $\qua_n$ may produce the same map $\map_n$. In
fact there is more information on the UIPM in $\qua_n$ than in $\map_n$, since
$\qua_n$ also gives information on the faces of $\map_{\infty}$. Indeed, let us
consider two edges of $\map_n$. Considering only $\map_n$, it is not possible
to say if the two edges are part of the same face in $\map_{\infty}$. However,
this information is
available in $\qua_n$: the two edges belong to the same face of $\map_{\infty}$
\emph{iff} their associated quadrangles share a common square-vertex in $\qua_n$. This is not
problematic for our purpose, since we are not interested in the sequence
$(\map_n)_{n \geqslant 0}$ by itself.

\section{Site percolation on the UIPM} \label{site_percolation}

In this section, we consider Bernoulli site percolation on the UIPM: the vertices are colored, independently of each other, \emph{black} with probability $q$, and \emph{white} with probability $(1-q)$. We prove the first part of Theorem \ref{main_theorem}: for site percolation on the UIPM, the percolation threshold is almost surely
$$p_c^{\textrm{site}}=2/3.$$

\subsection{Exploration process}
\label{expsite}

Consider $\map_{\infty}$ the UIPM, and $\qua_{\infty}$ the associated UIPQ.
Suppose that each vertex of $\map_{\infty}$ is colored independently at random,
black with probability $q$ and white with probability $(1-q)$ (in
$\qua_{\infty}$, this corresponds to a coloring of circle-vertices only). We are interested in percolation of the origin, i.e. the existence of an infinite black connected component containing the origin.

We also assume for simplicity that the root vertex of $\map_{\infty}$ -- which is also the root vertex
of $\qua_{\infty}$ -- is colored black. We can sample percolation on the UIPM
simultaneously with a peeling process of the UIPQ: each time a new vertex of
the UIPM is added, we color it randomly, independently of all previous steps.
Note that if at some step $n$, all the vertices of the UIPM that are on the
boundary $\partial \qua_n$ are white, then these vertices separate from infinity
(in $\map_{\infty}$) the root vertex, which therefore does not percolate (for black sites).

Now, recall that we can choose where the next quadrangle is revealed at each step of the
peeling process. In particular, we can let this choice depend on the percolation
configuration sampled so far. On the one hand, if all the vertices of the UIPM that are on the
boundary $\partial \qua_n$ have the same color, then we can make an arbitrary choice.
On the other hand, if there are white and black vertices on $\partial \qua_n$, then we can ensure that $\partial \qua_n$ remains
divided in two arcs: one arc with black vertices only, and the other one with white
vertices only. If we then follow the orientation of the boundary, there is a
unique choice of three consecutive vertices $x_{2p}$, $x_1$, and $x_2$, where
$x_{2p}$ and $x_2$ are black and white respectively, and $x_1$ is a
square-vertex between them.  We then reveal
the quadrangle on the left side of the oriented edge $(x_{2p},x_1)$ (see Figure \ref{siteperc}).

If this rule is followed, it is easy to see that all black vertices on $\partial \qua_n$
belong to the percolation cluster containing the root vertex of
$\map_{\infty}$, as long as the boundary does not become totally white, which corresponds to detecting a white circuit. However, note that white vertices of $\partial \qua_n$ do not
necessarily belong to the same white cluster, so black and
white sites do not play symmetric roles in this process: one cannot simply use the symmetry $q \leftrightarrow 1-q$. The connectedness of white sites corresponds to ``$*$-connectedness'', as it is usually called for percolation on planar graphs such as $\mathbb{Z}^2$.

\begin{figure}[ht!]
\begin{center}
\includegraphics[width=13cm]{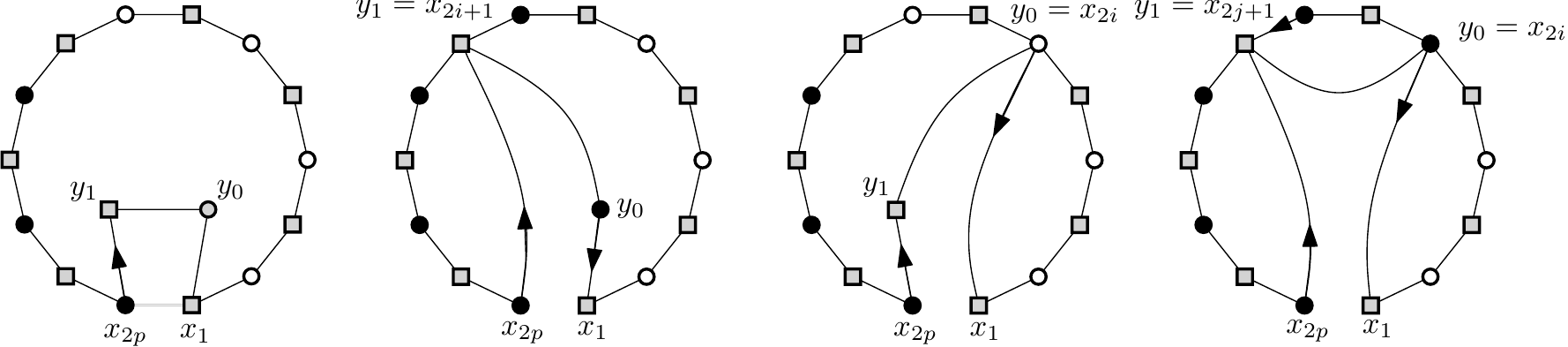}
\caption{\label{siteperc} This figure shows how percolation is sampled during the peeling process. The arrows on each
figure indicate the possible rerootings for the next peeling step. Note
that in the middle right case, if $y_0 = x_{2i}$ is white and if the quadrangulation on the right is infinite,
then a circuit of white vertices separates the root vertex from infinity, so
percolation does not occur.}
\end{center}
\end{figure}

\bigskip

Let us denote by $B_n$ the number of black vertices on $\partial \qua_n$, $W_n$
the number of white vertices, and by $\mathcal{F}_n$ the filtration generated by
$\qua_0, \qua_1 , \cdots,
\qua_n$ and their coloring. Recall that $X_n$ denotes the increment size of the
boundary length conditionally on $\mathcal{F}_n$, and that its distribution is
given by \eqref{stepsize1}, \eqref{stepsize2}. We now give the
explicit transition probabilities of $B_n$ conditionally on $\mathcal{F}_n$. In order to
simplify notation, we write $| \partial \qua_n |= 2 p$.

\bigskip

\begin{itemize}
 
\item[(1)] When $X_n = 1$, the face discovered has two new vertices, among them one
belonging to the UIPM, that gets color black or white (see Figure \ref{siteperc}, left for an illustration). Therefore,
\begin{equation*}
B_{n+1} = \begin{cases}
           B_n + 1 & \text{ with probability $q \frac{C_{p+1}}{12 C_p}$},\\
           B_n & \text{ with probability $(1 - q) \frac{C_{p+1}}{12 C_p}$}.
          \end{cases}
\end{equation*}

\end{itemize}

\bigskip

We now consider the event $X_n = -k \leqslant 0$ for some $k \in \{0,\ldots,p-1\}$, that is, some vertices are
removed from $\partial \qua_n$. Let us discuss the different cases that may occur, according to Section \ref{peeling}.

\begin{itemize}

\item[(2)] $y_0 \notin \partial \qua_n$ and $y_1 \in \partial \qua_n$ (Figure \ref{siteperc}, middle left). The
vertex $y_0$ belongs to the unexplored part of the UIPM, and it is colored black
or white (with the corresponding probabilities), independently of previously
chosen colors.

On the one hand, if the quadrangulation $\qua_n^l$ is infinite, then black vertices are removed
if and only if $p - k < B_n$, and in this case $\partial \qua_{n+1}$ has no white
vertices. If $p - k \geqslant B_n$, then no black vertex is removed and $B_{n+1} = B_n$. Hence, $B_{n+1} = \min \left( B_n, p-k \right)$ in this case.

On the other hand, if $\qua_n^r$ is infinite, then $|\partial \qua_n^l | = 2(k+1)$ and
the number of black vertices removed is $\min \left( B_n , k + 1
\right)$. In addition, one black vertex is added with probability $q$. This
gives
$B_{n+1} = \max \left( B_n - k - 1 , 0 \right) + 1$ with probability $q$, and
$B_{n+1} = \max \left( B_n - k - 1 , 0 \right)$ with probability $(1-q)$.

\item[(3)] $y_0 \in \partial \qua_n$ and $y_1 \notin \partial \qua_n$ (Figure \ref{siteperc}, middle right).
The situation is very similar to (2), except that no new colored vertex is
added. If the quadrangulation $\qua_n^l$ is infinite, then one has $B_{n+1} = \min
\left( B_n, p-k \right)$, and if $\qua_n^r$ is infinite, then $B_{n+1} = \max
\left( B_n - k , 0 \right)$.

\item[(4)] $y_0, y_1 \in \partial \qua_n$ (Figure \ref{siteperc}, right).
If $\qua_n^r$ is infinite, the
situation is identical to the corresponding case in (3) and $B_{n+1} = \max
\left( B_n - k , 0 \right)$, while if $\qua_n^l$ is infinite, the
situation is identical to the corresponding case in (2) and $B_{n+1} = \min
\left( B_n, p-k \right)$.

Finally, if $\qua_n^m$ is infinite, then there is $1 \leqslant i \leqslant k$
such that $|\partial \qua_n^l | = 2i $, and $B_{n+1} = \max \left( B_n - i , 0 \right)$.
\end{itemize}

For each of the previous cases, the corresponding probabilities have been
determined in Section \ref{peeling}. We deduce that conditionally on
$|\partial \qua_n| = 2p$, and when $X_n = -k$:
\begin{equation*}
B_{n+1} =
\begin{cases}
\min \left( B_n, p-k \right) & \text{w. p.
$2\frac{C_{p-k}Z_{k+1}}{12 C_p} + \frac{C_{p-k}}{12C_p} \sum_{i = 1}^k Z_i
Z_{k+1-i}$}, \\
\max \left( B_n - k , 0 \right) & \text{w. p.
$\frac{C_{p-k}Z_{k+1}}{12 C_p} + \frac{C_{p-k}}{12C_p} \sum_{i = 1}^k Z_i
Z_{k+1-i}$},\\
\max \left( B_n - k - 1 , 0 \right) + 1 & \text{w. p.
$q\frac{C_{p-k}Z_{k+1}}{12 C_p}$},\\
\max \left( B_n - k - 1 , 0 \right) & \text{w. p.
$(1 -q)\frac{C_{p-k}Z_{k+1}}{12 C_p}$},\\
\max \left( B_n - i , 0 \right) & \text{w. p.
$\frac{C_{p-k}}{12C_p} Z_i Z_{k+1-i}$ for $1 \leqslant i \leqslant k$}.
\end{cases}
\end{equation*}


\bigskip

\subsection{Derivation of \texorpdfstring{$p_c^{\textrm{site}}$}{pc}}
\label{sec:siteperc}

We now show that $p_c^{\textrm{site}} = 2/3$ a.s. We first prove that black vertices do not percolate when $q<2/3$, and then that they percolate when $q>2/3$. We denote by $C_{\infty}$ the event that the root vertex is in an infinite black cluster.

\bigskip

Let us first consider $q < 2/3$. We start by noting that
\begin{equation} \label{first_lemma}
P(C_{\infty} \cap \{B_n = 1 \: \text{infinitely often}\}) = 0,
\end{equation}
which follows from the observation that
$$P(C_{\infty}|B_n =1) \leq 1-c$$
for some universal constant $c > 0$. Indeed, if $B_n = 1$ and $X_n \leq -1$, then black vertices disappear on the next step with probability at least $1/2$. Hence, $B_{n+1}=0$ with probability at least
$$\frac{1}{2} P\big(X_n \leq -1\big) = \frac{1}{2} \Big(1 - \frac{4}{9} - \frac{3}{8} + o(1)\Big)$$
(using the distribution of $X$). This implies that
$$P(C_{\infty} \cap \{B_n = 1 \: \text{at least $k$ times}\}) \leq (1 - c)^k,$$
by conditioning on the first $k$ such times, and \eqref{first_lemma} follows readily.

\bigskip

We will now assume that $P(C_{\infty})>0$. As we have just observed, we can suppose that a.s., $B_n \geq 2$ for $n$ large enough. We introduce a modified Markov chain $(B'_n)$ obtained by ``simplifying'' $(B_n)$, in particular by allowing it to take negative values (and coupled in a natural way). More precisely, we consider the chain with the following transition probabilities, conditionally on $|\partial \qua_n| = 2p$:
\begin{equation*}
B'_{n+1} = \begin{cases}
           B'_n + 1 & \text{ with probability $q \frac{C_{p+1}}{12 C_p}$},\\
           B'_n & \text{ with probability $(1 - q) \frac{C_{p+1}}{12 C_p}$}
          \end{cases}
\end{equation*}
(corresponding to $X_n = 1$), and
\begin{equation*}
B'_{n+1} =
\begin{cases}
B'_n & \text{w. p.
$2\frac{C_{p-k}Z_{k+1}}{12 C_p} + \frac{C_{p-k}}{12C_p} \sum_{i = 1}^k Z_i
Z_{k+1-i}$}, \\
B'_n - k & \text{w. p.
$(1+q)\frac{C_{p-k}Z_{k+1}}{12 C_p} + \frac{C_{p-k}}{12C_p} \sum_{i = 1}^k Z_i
Z_{k+1-i}$},\\
B'_n - k - 1 & \text{w. p.
$(1 -q)\frac{C_{p-k}Z_{k+1}}{12 C_p}$},\\
B'_n - i & \text{w. p.
$\frac{C_{p-k}}{12C_p} Z_i Z_{k+1-i}$ for $1 \leqslant i \leqslant k$}
\end{cases}
\end{equation*}
for every $k=0,\ldots,p-1$ (corresponding to $X_n = -k$).

Now, let us note that the increment $(B'_{n+1} - B'_n)$ is equal to the increment $(B_{n+1} - B_n)$
except in the following three cases.
\begin{itemize}
\item $B_{n+1} = \min \left( B_n, p-k \right)$ and $B_n > p-k$: in this case,
 $$B_{n+1} - B_n = \min(B_n,p-k) - B_n = (p-k) - B_n < 0 = B'_{n+1} - B'_n.$$
\item $B_{n+1} = \max \left( B_n - k - 1 , 0 \right) + 1$ and $B_n -k-1 < 0$: in this case, $B_{n+1}=1$, which is ruled out by \eqref{first_lemma} (for $n$ large enough).
\item In each of the remaining three sub-cases, when $B_n -k < 0$, $B_n -k-1 < 0$, or $B_n - i < 0$ (resp.): this means that the number of black vertices gets negative, so that percolation does not occur.
\end{itemize}
Therefore, conditionally on $C_{\infty}$, one has $B_n \leqslant B_n' + O(1)$. We
will see that almost surely, $B'_n \to -\infty$,
and therefore there exists $n$ such that $B_n=0$. This will imply that the probability that percolation occurs is $0$. One has:

\begin{align*}
E & \left[B'_{n+1} - B'_n \middle| |\partial \qua_n| = 2p \right] \\
& = q P \left( X_n = 1 \middle| |\partial \qua_n| = 2p \right) 
 - \sum_{k = 0}^{p-1} k \left( 2\frac{C_{p-k}Z_{k+1}}{12 C_p} +
\frac{C_{p-k}}{12C_p} \sum_{i = 1}^k Z_i Z_{k+1-i} \right)\\
& \quad \quad - (1- q) \sum_{k=0}^{p-1} \frac{C_{p-k}Z_{k+1}}{12 C_p} -
\sum_{k=1}^{p-1}
\sum_{i = 1}^ k i \frac{C_{p-k}}{12C_p} Z_i Z_{k+1-i}\\
& = \left(q - \frac{1}{2}  \right) P \left( X_n = 1 \middle| |\partial \qua_n| =
2p
\right)  + \frac{1}{2} E \left[X_n \middle| |\partial \qua_n| = 2p \right] \\
& \quad \quad - (1- q) \sum_{k=0}^{p-1} \frac{C_{p-k}Z_{k+1}}{12 C_p} +
\sum_{k=1}^{p-1}
\sum_{i = 1}^ k \left(\frac{k}{2} - i \right) \frac{C_{p-k}}{12C_p} Z_i
Z_{k+1-i}\\
& = \left(q - \frac{1}{2}  \right) \frac{C_{p+1}}{12 C_p}  + \frac{1}{2} E
\left[X_n \middle| |\partial \qua_n| = 2p \right] \\
& \quad \quad - (1- q) \sum_{k=0}^{p-1} \frac{C_{p-k}Z_{k+1}}{12 C_p} -
\frac{1}{2}
\sum_{k=1}^{p-1} \sum_{i = 1}^ k  \frac{C_{p-k}}{12C_p} Z_i
Z_{k+1-i}.
\end{align*}
Corollary \ref{expectX}, and the computations performed in its proof, ensure that a.s.
\[
E \left[B'_{n+1} - B'_n \middle| \mathcal{F}_n \right]
\longrightarrow
\left(q - \frac{1}{2}  \right)\frac{3}{8} - (1-q) \frac{1}{8} -
\frac{1}{2}\frac{1}{24} = \frac{q}{2} - \frac{1}{3}
\]
as $n \to \infty$. Therefore, $E \left[B'_{n+1} - B'_n \middle|
\mathcal{F}_n \right]$ is negative and bounded away from $0$ for
$n$ large enough. This suffices to prove that $B'_n \to - \infty$ almost
surely, and percolation does not occur.

\bigskip

Now, let us take a value $q > 2/3$. As mentioned earlier, one cannot simply exchange the roles
of black and white sites to prove that $W_n$ stays small, and that consequently black vertices percolate. However, using $X_n = (W_{n+1} - W_n) + (B_{n+1} - B_n)$, we can obtain: conditionally on $|\partial \qua_n| = 2p$,
\begin{equation*}
W_{n+1} =
\begin{cases}
W_n +1 & \text{w. p. $(1-q) \frac{C_{p+1}}{12 C_p}$}\\
W_n & \text{w. p. $q \frac{C_{p+1}}{12 C_p}$}\\
\max \left( W_n - k, 0 \right) & \text{w. p.
$2\frac{C_{p-k}Z_{k+1}}{12 C_p} + \frac{C_{p-k}}{12C_p} \sum_{i = 1}^k Z_i
Z_{k+1-i}$}, \\
\min \left( W_n, p - k \right) & \text{w. p.
$\frac{C_{p-k}Z_{k+1}}{12 C_p} + \frac{C_{p-k}}{12C_p} \sum_{i = 1}^k Z_i
Z_{k+1-i}$},\\
\min \left( W_n, p - k - 1 \right) & \text{w. p.
$q\frac{C_{p-k}Z_{k+1}}{12 C_p}$},\\
\min \left( W_n , p - k - 1 \right) +1 & \text{w. p.
$(1 -q)\frac{C_{p-k}Z_{k+1}}{12 C_p}$},\\
\max \left( W_n - (i -1) , 0 \right) & \text{w. p.
$\frac{C_{p-k}}{12C_p} Z_i Z_{k+1-i}$ for $1 \leqslant i \leqslant k$}.
\end{cases}
\end{equation*}

In a similar way as for $B'_n$, we consider the process $W'_n$, coupled with $W_n$
and with increments given conditionally on $|\partial \qua_n| = 2p$ by :
\begin{equation*}
W'_{n+1} =
\begin{cases}
W'_n +1 & \text{w. p. $(1-q) \frac{C_{p+1}}{12 C_p} + (1 - q)
\frac{C_{p-k}Z_{k+1}}{12 C_p}$}\\
W'_n & \text{w. p. $q \frac{C_{p+1}}{12 C_p} + (1+ q) \frac{C_{p-k}Z_{k+1}}{12
C_p} + \frac{C_{p-k}}{12C_p} \sum_{i = 1}^k Z_i Z_{k+1-i}$}\\
W'_n - k & \text{w. p.
$2\frac{C_{p-k}Z_{k+1}}{12 C_p} + \frac{C_{p-k}}{12C_p} \sum_{i = 1}^k Z_i
Z_{k+1-i}$}, \\
W'_n - (i -1) & \text{w. p.
$\frac{C_{p-k}}{12C_p} Z_i Z_{k+1-i}$ for $1 \leqslant i \leqslant k$}.
\end{cases}
\end{equation*}
Then, conditionally on the event $\{W_n > 0, \forall n \geqslant 0 \}$, the
increments of $W'_n$ are bigger than the increments of $W_n$. As $n
\to \infty$, one has
\[
 E \left[W'_{n+1} - W'_n \middle| \mathcal{F}_n \right]
\longrightarrow
\frac{1}{3} - \frac{q}{2} := - \alpha < 0,
\]
from which one can easily deduce that a.s. $W_n = O(\ln n)$: we now provide an explicit proof for the sake of completeness. If we write $\Delta_n = W'_{n+1} - W'_n$, we obtain, for $n \geq N$,
$$E \big[\Delta_n | \mathcal{F}_n \big] \leq - \frac{\alpha}{2}.$$
For any fixed $n \geq m \geq N$, one has
$$P \bigg( \sum_{l=m}^n \Delta_l > C \ln n | \mathcal{F}_m \bigg) \leq e^{-\lambda C \ln n} E \bigg[ \exp \Big( \lambda \sum_{l=m}^n \Delta_l \Big) \big| \mathcal{F}_m \bigg].$$
We can write
$$E \bigg[ \exp \Big( \lambda \sum_{l=m}^n \Delta_l \Big) \big| \mathcal{F}_m \bigg] = E \bigg[ \exp \Big( \lambda \sum_{l=m}^{n-1} \Delta_l \Big) E[e^{\lambda \Delta_n} | \mathcal{F}_{n-1} ] \big| \mathcal{F}_m \bigg],$$
and use
\begin{equation*}
E[e^{\lambda \Delta_n} | \mathcal{F}_{n-1} ]
\leq 1 + C_1 \lambda E[\Delta_n | \mathcal{F}_{n-1}] + C_2 |\lambda|^{5/4} E[|\Delta_n|^{5/4} | \mathcal{F}_{n-1}]
\leq 1
\end{equation*}
if we choose a $\lambda >0$ small enough (we used $E[|\Delta_n|^{5/4} | \mathcal{F}_{n-1}] \leq M$, for some universal constant $M$: this follows from the fact that the probabilities $P \left( X_n = -k \big| |\partial \qua_n | = 2p
\right)$ are increasing in $p$ and converge to $q_k$, which is of order $q_k \sim c k^{-5/2}$ -- using \eqref{Z_p} and \eqref{q_k}). By iterating this reasoning, we find
$$P \bigg( \sum_{l=m}^n \Delta_l > C \ln n | \mathcal{F}_m \bigg) \leq e^{-\lambda C \ln n},$$
which allows one to conclude, by using a Borel Cantelli argument (choosing a large enough $C$).
Since $B_n + W_n = |\partial \qua_n | \approx n^{2/3}$, we deduce that $B_n \approx n^{2/3}$: in particular, black vertices percolate.

\section{Bond percolation on the UIPM} \label{bond_percolation}

In this section, we study bond percolation, instead of site
percolation, on the UIPM: each edge is open with probability $q$, and closed with
probability $(1-q)$, independently of other edges. We prove the second part of Theorem \ref{main_theorem}: the corresponding percolation threshold is almost surely
$$p_c^{\textrm{bond}}=1/2.$$

\subsection{Exploration process}

In this section, we describe how to sample bond percolation on the UIPM
simultaneously with a peeling process of the UIPQ. This is similar to the
exploration process for site percolation described in Section \ref{expsite},
but small adaptations are needed for the process to actually follow the boundary of the
percolation cluster of the root vertex. We will assume for simplicity that the root edge of
$\map_{\infty}$ is open.

Let us consider the UIPM $\map_{\infty}$, and $\qua_{\infty}$ the associated
UIPQ.
Let us denote by $\qua_0, \qua_1, \ldots, \qua_n$ the peeling process for
$\qua_{\infty}$, and $\map_0, \map_1, \ldots, \map_n$ the associated submaps of
$\map_{\infty}$. Each time a new face of $\qua_{\infty}$ is discovered, the
corresponding edge of $\map_{\infty}$ is opened with probability $q$, and closed
with probability $(1-q)$ independently of all previous steps. The percolation
interfaces between open and closed edges can be viewed as a random tiling of
$\qua_{\infty}$, as illustrated in Figure \ref{tiling}.

\begin{figure}[ht!]
\begin{center}
\includegraphics[width=7cm]{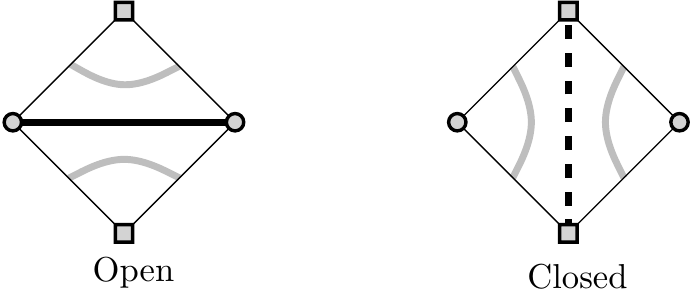}
\caption{\label{tiling}The exploration process can be seen as a random tiling of the quadrangles that are successively discovered.}
\end{center}
\end{figure}

It is possible to adapt the peeling process in order to follow
percolation interfaces. Let $\map_n^0$ denote the set of vertices connected to the
root vertex of $\map_n$ by open paths lying in $\map_n$: this is the part of the cluster of the root $\map_{\infty}^0$ discovered before time $n$ with
the peeling procedure. The choice of the next quadrangle
to reveal is
very similar to what we did for site percolation. Recall that on the quadrangulation, circle-vertices belong to the associated map, while square-vertices lie on the dual of this map. On the one hand, if all circle-vertices of
$\partial \qua_n$ belong to $\map_n^0$, or if, on the contrary, no circle-vertex of
$\partial \qua_n$ belongs to $\map_n^0$, then we can make an arbitrary choice for the next step. On the other hand,
if some, but not all, circle-vertices of $\partial \qua_n$ belong to $\map_n^0$, then we can find three vertices $x_{2p}$, $x_1$, $x_2$ (in this order) such that $x_{2p}$ belongs to $\map_n^0$, but not $x_2$ (see Figure \ref{bond}): we reveal the quadrangle on the left side of the edge $(x_{2p} x_1)$. Provided that this
procedure is followed during the peeling process, then the vertices of
$\partial \qua_n \cap \map_n^0$ form an arc of $\partial \qua_n$.

\begin{figure}[h!]
\begin{center}
\includegraphics[width=13cm]{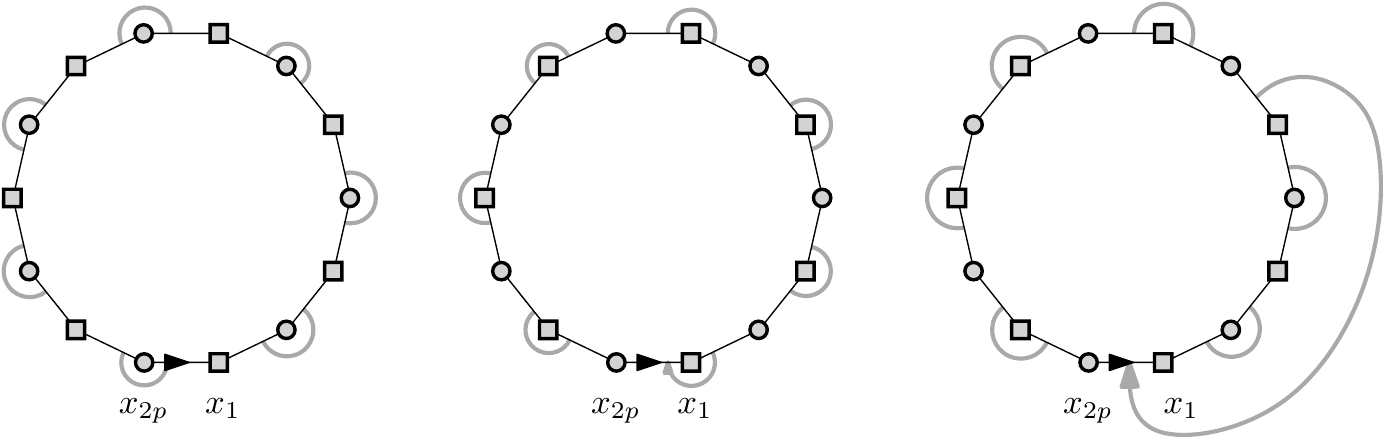}
\caption{\label{bond} Configurations obtained by following the exploration
process during the peeling procedure. \emph{Left:} No vertex of the boundary belongs to the
explored part of the cluster of the root, so that percolation does not occur. \emph{Middle:} All
vertices of the boundary belong to the explored part of the cluster of the root. \emph{Right:}
The vertices on the left belong to the explored part of the cluster of the root, whereas
the vertices on the right do not belong to the discovered part of the cluster of the root.}
\end{center}
\end{figure}
Now, let $A_n$ denote the number of vertices of $\partial \qua_n$
that belong to $\map_n^0$. If there exists $n$ such that $A_n = 0$, then
the root vertex does not percolate, and $A_k = 0$ for all $k \geqslant n$. On the other hand,
if $(A_n)$ is unbounded, then percolation does occur. Let $\mathcal{F}_n$ denote the
filtration generated by $\qua_0, \ldots, \qua_n$ and bond percolation on them.
Let $n > 0$, and suppose that $A_n > 0$. Following a similar strategy as for site percolation, we give
explicit transition probabilities for $A_n$ conditionally on $\mathcal{F}_n$. Recall that $X_n$ denotes
the increment size for the boundary length conditionally on $\mathcal{F}_n$, and that
its distribution is given by \eqref{stepsize1}, \eqref{stepsize2}. Let us also set $| \partial \qua_n |= 2 p$, as before. 

\bigskip

\begin{itemize}

\item[(1)] When $X_n = 1$, let us denote by $(x_{2p},x_1,y_0,y_1)$ the face discovered: it has two new vertices, and a new edge $(x_{2p},y_0)$ of the UIPM. With
probability $q$, the new edge is open and there is an open path joining
$y_0$ to
the root vertex in $\map_{n+1}$. With probability $(1-q)$, this edge is closed
and $y_0$
does not belong to the part discovered of the cluster of the root (note that $y_0$ may still
belong to the cluster of the root, if some of the edges that connect it to the root
have not yet been discovered). This yields
\begin{equation*}
A_{n+1} = \begin{cases}
           A_n + 1 & \text{ with probability $q \frac{C_{p+1}}{12 C_p}$},\\
           A_n & \text{ with probability $(1 - q) \frac{C_{p+1}}{12 C_p}$}
          \end{cases}
\end{equation*}
(see Figure \ref{bond1} for an illustration).

\end{itemize}

\begin{figure}[ht!]
\begin{center}
\includegraphics[width=13cm]{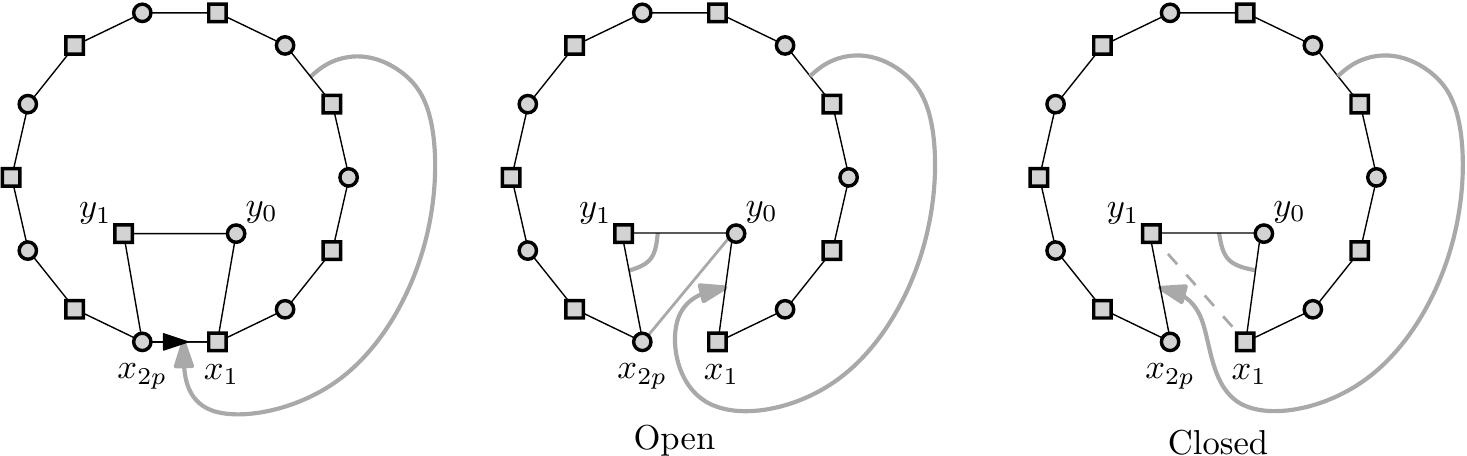}
\caption{\label{bond1} Evolution of the exploration process in case (1), when two new
vertices are discovered.}
\end{center}
\end{figure}

\bigskip

Suppose now that $X_n = -k$. We discuss the different cases that appeared in Section \ref{peeling}.

\begin{itemize}

\item[(2)] $y_0 \notin \partial \qua_n$ and $y_1 \in \partial \qua_n$ (see Figure \ref{bond2}). The situation is somewhat
similar to site percolation, except that the vertices of $\partial \qua_n
\cap \map_n$ that do not belong to $\map_n^0$ may still be connected to it by
 not-yet-discovered open edges. We claim that, except for $y_0$, a vertex of
$\partial \qua_{n+1} \cap \map_{\infty}$ belongs to $\map_{n+1}^0$ if and only if
it belongs to $\map_{n}^0$. Indeed, the two parts $\qua_n^l$ and $\qua_n^r$
can only be connected by the new edge or by vertices of $\map_n$, therefore,
filling the finite part with a mix of open and closed edges will not change whether vertices on the boundary of the infinite one belong or not to
$\map_{n}^0$. This gives the same transitions as for site percolation:
\begin{equation*}
A_{n+1} =
\begin{cases}
\min \left( A_n, p-k \right) & \text{ (if $\qua_n^l$ is infinite),} \\
\max \left( A_n - k - 1 , 0 \right) + 1 & \text{ with probability
$q$ (if $\qua_n^r$ is infinite),}\\
\max \left( A_n - k - 1 , 0 \right) & \text{ with probability
$(1 -q)$ (if $\qua_n^r$ is infinite).}
\end{cases}
\end{equation*}
\begin{figure}[ht!]
\begin{center}
\includegraphics[width=13cm]{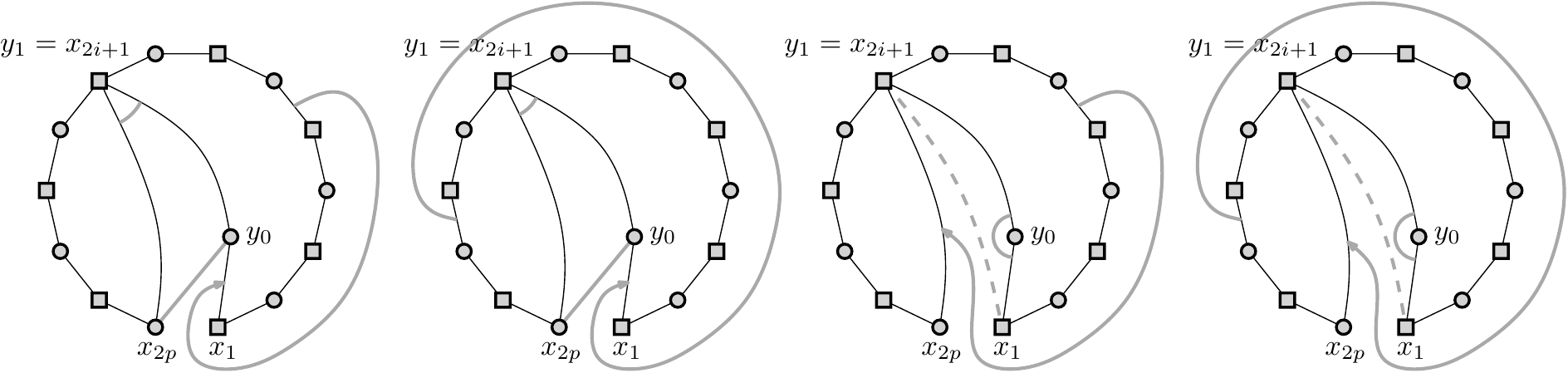}
\caption{\label{bond2} Evolution of the exploration process in case (2).}
\end{center}
\end{figure}

\bigskip

\item[(3)] $y_0 \in \partial \qua_n$ and $y_1 \notin \partial \qua_n$. Here the situation is similar, except for a notable difference when $y_0 \notin \map_n^0$. Indeed, in this case one can have $y_0 \in \map_{n+1}^0$ even if the new bond is closed. This happens when $\qua_n^r$ is infinite:
filling e.g. $\qua_n^l$ with open edges connects $y_0$ to the root
vertex by a path of open edges belonging to $\qua_{n+1}$ as long as there is at
least one vertex of $\partial \qua_n$ that belongs to $\map_n^0$. On the other
hand, filling $\qua_n^l$ with closed edges leaves $y_0$ disconnected from the
root vertex in $\qua_{n+1}$, and percolation does not occur (see Figure
\ref{bond3} for an illustration). The corresponding probabilities depend on $\qua_n^l$, but their exact values will not be needed. Note that if
$\qua_n^l$ is infinite, then $y_0$ stays disconnected from the root vertex in
$\qua_{n+1}$ if $y_0 \notin \map_n^0$.
\begin{figure}[ht!]
\begin{center}
\includegraphics[width=13cm]{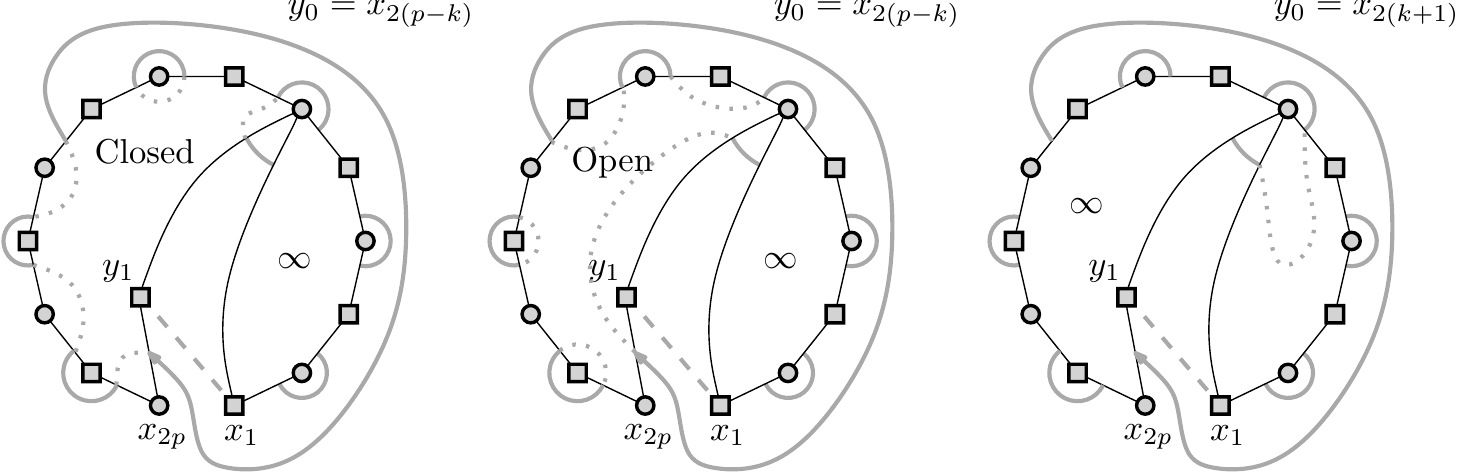}
\caption{\label{bond3} Evolution of the exploration process in case (3),
when $y_0 \notin \map_n^0$. \emph{Left:} $y_0 \notin \map_{n+1}^0$ and percolation
does not occur. \emph{Middle:} $y_0 \in \map_{n+1}^0$. \emph{Right:} $y_0 \notin
\map_{n+1}^0$.}
\end{center}
\end{figure}

The transitions are thus given by:
\begin{equation*}
A_{n+1} =
\begin{cases}
\min \left( A_n, p-k \right) & \text{ with
probability $(1-q)$ (if $\qua_n^l$ is infinite),} \\
\min \left( A_n + 1, p-k \right) & \text{ with
probability $q$ (if $\qua_n^l$ is infinite),} \\
A_n - k & \text{ if $A_n - k > 0$ (if $\qua_n^r$ is infinite),}\\
 0 & \text{ with probability
$> 0$ if $A_n - k \leqslant 0$ (if $\qua_n^r$ is infinite),}\\
 1 & \text{ with probability
$> 0$ if $A_n - k \leqslant 0$ (if $\qua_n^r$ is infinite).}
\end{cases}
\end{equation*}

\bigskip

\item[(4)] $y_0, y_1 \in \partial \qua_n$. If $\qua_n^l$ is infinite, then the situation is simple and a vertex of $\partial
\qua_{n+1}$ belongs to $\map_{n+1}^0$ \emph{iff} it belongs to $\map_n^0$. We thus obtain $A_{n+1} = \min \left( A_n, p-k \right)$ in this case.

If $\qua_n^r$ is infinite, then the situation is identical to case (3) (when $\qua_n^r$ is infinite), which gives
\begin{equation*}
A_{n+1} =
\begin{cases}
A_n - k & \text{ if $A_n - k > 0$,}\\
 0 & \text{ with probability
$> 0$ if $A_n - k \leqslant 0$,}\\
 1 & \text{ with probability
$> 0$ if $A_n - k \leqslant 0$.}
\end{cases}
\end{equation*}

Finally, when $\qua_n^m$ is infinite, let us write $y_0 = x_{2i}$ ($1 \leqslant i \leqslant k$). If $p-i \leqslant A_n - 1$, then every vertex of
$\partial \qua_n^m \cap \map_{\infty}$ is in $\map_n^0$. In this case we have
$A_{n+1} = p-k$.
Suppose now that $p-i \geqslant A_n$. The $(k-i+1)$ circle-vertices of $\partial
\qua_n^l$ are not in $\partial \qua_{n+1}$, which means that $\max \left( A_n -k
+ i - 1,0 \right)$ vertices in $\partial \qua_{n+1}$ belong to $\map_n^0$. These vertices also belong to
$\map_{n+1}^0$, and in addition, the vertex $y_0$ belongs to $\map_{n+1}^0$ \emph{iff}
the new edge is open. To sum up, the transitions in this final situation are:
\begin{equation*}
A_{n+1} =
\begin{cases}
p - k & \text{ if $A_n > p-i$,}\\
\max \left( A_n - k + i - 1 , 0 \right) + 1 & \text{ with probability
$q$ if $A_n \leqslant p-i$,}\\
\max \left( A_n - k + i - 1 , 0 \right) & \text{ with probability
$(1-q)$ if $A_n \leqslant p-i$.}\\
\end{cases}
\end{equation*}

\end{itemize}

\bigskip

\subsection{Derivation of \texorpdfstring{$p_c^{\textrm{bond}}$}{pc}}

Suppose now $q < 1/2$, and consider the modified Markov chain $(A'_n)$ with conditional transition probabilities  given $\mathcal{F}_n$: if $X_n = 1$,
\begin{equation}
A'_{n+1} = \begin{cases}
           A'_n + 1 & \text{ with probability $q \frac{C_{p+1}}{12 C_p}$,}\\
           A'_n & \text{ with probability $(1 - q) \frac{C_{p+1}}{12 C_p}$.}
          \end{cases}
\end{equation}
If $X_n = - k$, we set
\begin{equation}
A'_{n+1} =
\begin{cases}
A'_n & \text{ with probability
$(2-q) \frac{C_{p-k}Z_{k+1}}{12 C_p} +
\frac{C_{p-k}}{12C_p} \sum_{i = 1}^k Z_i
Z_{k+1-i}$}, \\
A'_n + 1 & \text{ with probability
$q\frac{C_{p-k}Z_{k+1}}{12 C_p}$} \\
\end{cases}
\end{equation}
(corresponding to $\qua_n^l$ infinite),
\begin{equation}
A'_{n+1} =
\begin{cases}
A'_n - k & \text{ with probability
$(1+q)\frac{C_{p-k}Z_{k+1}}{12 C_p} + \frac{C_{p-k}}{12C_p} \sum_{i = 1}^k Z_i
Z_{k+1-i}$},\\
A'_n - k - 1 & \text{ with probability
$(1 -q)\frac{C_{p-k}Z_{k+1}}{12 C_p}$}\\
\end{cases}
\end{equation}
(corresponding to $\qua_n^r$ infinite), and
\begin{equation}
A'_{n+1} =
\begin{cases}
A'_n - k + i & \text{ with probability
$q \frac{C_{p-k}}{12C_p} Z_i Z_{k+1-i}$ for $1 \leqslant i \leqslant k$},\\
A'_n - k + i - 1 & \text{ with probability
$(1-q) \frac{C_{p-k}}{12C_p} Z_i Z_{k+1-i}$ for $1 \leqslant i \leqslant k$}
\end{cases}
\end{equation}
(corresponding to $\qua_n^m$ infinite).

\bigskip

In a similar way as for site percolation, we can write
\begin{align*}
E & \left[A'_{n+1} - A'_n \middle| |\partial \qua_n| = 2p \right]\\
& = q P \left( X_n = 1 \middle| |\partial \qua_n| = 2p \right) 
 - \sum_{k = 0}^{p-1} k \left( \frac{1}{2} P \left( X_n = -k \middle|
|\partial \qua_n| = 2p \right) - \frac{1}{2}
\sum_{i = 1}^ k \frac{C_{p-k}}{12C_p} Z_i Z_{k+1-i}
\right)\\
& \quad \quad - (1- q) \sum_{k=0}^{p-1} \frac{C_{p-k}Z_{k+1}}{12 C_p} 
+ q \sum_{k=0}^{p-1} \frac{C_{p-k}Z_{k+1}}{12 C_p}\\
& \quad \quad + \sum_{k=1}^{p-1}
\sum_{i = 1}^ k (-k + i) \frac{C_{p-k}}{12C_p} Z_i Z_{k+1-i}
- (1-q) \sum_{k=1}^{p-1}
\sum_{i = 1}^ k \frac{C_{p-k}}{12C_p} Z_i Z_{k+1-i}
\\
& = \left(q - \frac{1}{2}  \right) P \left( X_n = 1 \middle| |\partial \qua_n| = 2p
\right)  + \frac{1}{2} E \left[X_n \middle| |\partial \qua_n| = 2p \right] \\
& \quad \quad + (2q - 1) \sum_{k=0}^{p-1} \frac{C_{p-k}Z_{k+1}}{12 C_p} +
\sum_{k=1}^{p-1}
\sum_{i = 1}^ k \left(- \frac{k}{2} + i \right) \frac{C_{p-k}}{12C_p} Z_i
Z_{k+1-i}\\
& \quad \quad + (q - 1) \sum_{k=0}^{p-1} \sum_{i = 1}^k \frac{C_{p-k}}{12C_p} Z_i
Z_{k+1-i}
\\
& = \left(q - \frac{1}{2}  \right) P \left( X_n = 1 \middle| |\partial \qua_n| = 2p
\right)  + \frac{1}{2} E \left[X_n \middle| |\partial \qua_n| = 2p \right] \\
& \quad \quad + (2q - 1) \sum_{k=0}^{p-1} \frac{C_{p-k}Z_{k+1}}{12 C_p} +  \left(q -
\frac{1}{2} \right) \sum_{k=0}^{p-1} \sum_{i = 1}^k \frac{C_{p-k}}{12C_p} Z_i
Z_{k+1-i},
\end{align*}
which is negative and stays bounded away from $0$ as $n \to \infty$. Using a domination of $A_n$ by $A'_n$ as we did for site percolation, we deduce that percolation does not occur a.s., and $A_n$ ``stays small''. Here we can then use directly a symmetry argument, and deduce that $p_c= 1/2$ a.s.

\subsection{Bond percolation on quadrangulations}

As a conclusion, we would like to mention that the previous reasoning can easily be adapted to study bond percolation on various classes of maps, in particular on $p$-angulations, as soon as one has counting formulas such as \eqref{counting_quad} at one's disposal. 

\begin{figure}[ht!]
\begin{center}
\includegraphics[width=13cm]{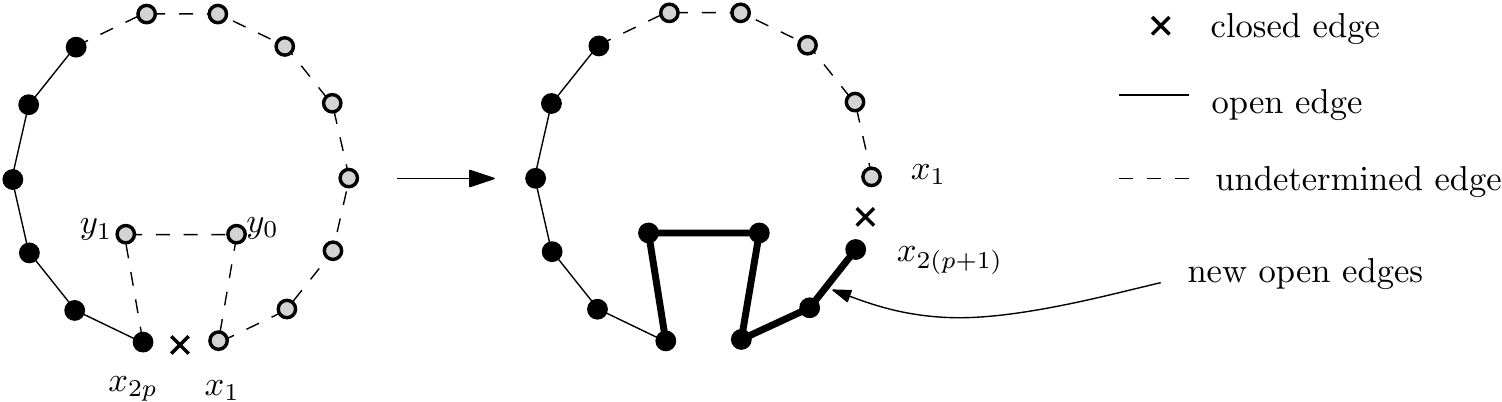}
\caption{\label{bondquad}Exploration process for bond percolation on the UIPQ. At each step, we explore iteratively the available ``undetermined'' edges until we find a closed one. The remaining boundary edges are then left undetermined.}
\end{center}
\end{figure}

For example, the previous peeling process can be used for bond percolation on the UIPQ: we now describe explicitly the exploration process in this case. We
consider percolation with parameter $q$, and will follow the boundary of
a cluster of open edges by exploring only the neighboring closed
edges, and leaving ``undetermined'' the remaining ones. More
precisely, conditionally on $|\partial \qua_n | = 2p$, the boundary $\partial \qua_n$ will consist in this case of
a certain number $A_n$ of vertices of the UIPQ, connected by $(A_n - 1)$
open edges, $1$ closed edge, and $U_n = 2p - A_n$ undetermined edges. Note
that $U_n$ also counts the number of ``free'' vertices that can get
connected in a later step to the open cluster that we are following.

At each step, we then reveal a quadrangle $(x_{2p}, x_1, y_0, y_1)$ as
before, lying on the left hand side of the unique closed edge $e=
(x_{2p},x_1)$, and we explore successively the undetermined edges
following $e$ on $\partial \qua_{n+1}$, until we find a closed one (or
no undetermined edge remains, in which case we can consider the edge explored last to be closed without any loss of generality). A certain number of new vertices get
connected in this way, which follows a geometric distribution with
parameter $q$ truncated by the number of undetermined edges $N$: let us
introduce the notation $G_q(N)$ for such a distribution (i.e. $P(G_q(N) = k) = q^k(1-q)$ for $0
\leq k < N$, and $= q^N$ for $k=N$).

\bigskip

\begin{itemize}

\item[(1)] When $y_0,y_1 \notin \partial \qua_n$, i.e. $X_n = 1$, we
simply have $U_n+2 = 2p - A_n+2$ free vertices at our disposal. This
yields
$$A_{n+1} = A_n \stackrel{(\ci)}{+} G_q(2p-A_n+2)$$
(see Figure \ref{bondquad} for an illustration).

\end{itemize}

\bigskip

Let us now assume that $X_n = -k$.

\begin{itemize}

\item[(2)] In the case when $y_0 \notin \partial \qua_n$ and $y_1 \in
\partial \qua_n$, we obtain:
\begin{itemize}
\item if $\qua_n^l$ is infinite,
\begin{equation*}
A_{n+1} =
\begin{cases}
A_n \stackrel{(\ci)}{+} G_q(2(p-k)-A_n) & \text{ (if $A_n < 2(p-k)$)},\\
2(p-k) & \text{ (if $A_n \geq 2(p-k)$)},
\end{cases}
\end{equation*}

\item if $\qua_n^r$ is infinite,
\begin{equation*}
A_{n+1} =
\begin{cases}
\big[A_n - (2k+1)\big] \stackrel{(\ci)}{+} G_q(2p-A_n+1) & \text{
(if $A_n \geq 2k+2$)},\\
\\
0 & \text{ w.p. $> 0$ (if $A_n < 2k+2$)},\\
1 \stackrel{(\ci)}{+} G_q(2(p-k)-1) & \text{ w.p. $> 0$ (if $A_n < 2k+2$).}
\end{cases}
\end{equation*}
\end{itemize}

\bigskip

\item[(3)] In the case when $y_0 \in \partial \qua_n$ and $y_1 \notin
\partial \qua_n$, we obtain:
\begin{itemize}
\item if $\qua_n^l$ is infinite,
\begin{equation*}
A_{n+1} =
\begin{cases}
A_n \stackrel{(\ci)}{+} G_q(2(p-k)-A_n) & \text{ (if $A_n < 2(p-k)-1$)},\\
2(p-k)-1 \stackrel{(\ci)}{+} G_q(1) & \text{ (if $A_n \geq 2(p-k)-1$)},
\end{cases}
\end{equation*}

\item if $\qua_n^r$ is infinite,
\begin{equation*}
A_{n+1} =
\begin{cases}
\big[A_n - 2k\big] \stackrel{(\ci)}{+} G_q(2p-A_n) & \text{ (if $A_n
\geq 2k+1$)},\\
\\
0 & \text{ w.p. $> 0$ (if $A_n < 2k+1$)},\\
1 \stackrel{(\ci)}{+} G_q(2(p-k)-1) & \text{ w.p. $> 0$ (if $A_n < 2k+1$).}
\end{cases}
\end{equation*}
\end{itemize}

\bigskip

\item[(4)] In the case when $y_0, y_1 \in \partial \qua_n$, we obtain
the same transitions as in case (2) when $\qua_n^l$ is infinite, and
as in case (3) when $\qua_n^r$ is infinite. Finally, if $\qua_n^m$ is
infinite, let us write $y_0 = x_{2i}$ ($1 \leq i \leq k$). Then
\begin{equation*}
A_{n+1} =
\begin{cases}
2(p-k) & \text{ (if $A_n > 2(p-i)$)},\\
\\
\big[A_n - 2(k-i)-1\big] \\
\stackrel{(\ci)}{+} G_q(2(p-i)+1-A_n) &
\text{ (if $2(k-i)+2 \leq A_n \leq 2(p-i)$)},\\
\\
0 & \text{ w.p. $> 0$ (if $A_n < 2(k-i)+2$)},\\
1 \stackrel{(\ci)}{+} G_q(2(p-k)-1) & \text{ w.p. $> 0$ (if $A_n <
2(k-i)+2$)}.
\end{cases}
\end{equation*}

\end{itemize}

We can prove, in the same way as for site and bond percolation on the
UIPM, that if percolation occurs, then we fall only finitely many times into one of the cases when one returns to
$0$ or $1$ before exploring undetermined edges (if we return at a certain
time $n$, then $A_{n+1} = 0$ with a probability at least $c$, for some
universal constant $c>0$).

We now prove that $q_c = 1/3$. Let us first assume $q<1/3$, and dominate $A_n$ by $A'_n$ obtained by replacing all truncated geometric distributions by non-truncated ones (denoted by $\mathcal{G}_q$ in the following), and allowing it to take negative values as before. We first have, when $X_n=1$,

\begin{equation}
A'_{n+1} = A'_n \stackrel{(\ci)}{+} \mathcal{G}_q \quad \text{with probability $\frac{C_{p+1}}{12 C_{p}}$}.
\end{equation}
If $X_n = - k$, we set
\begin{equation}
A'_{n+1} =
A'_n \stackrel{(\ci)}{+} \mathcal{G}_q \quad \text{ with probability
$2 \frac{C_{p-k}Z_{k+1}}{12 C_{p}} +
\frac{C_{p-k}}{12C_{p}} \sum_{i = 1}^k Z_i
Z_{k+1-i}$}
\end{equation}
(corresponding to $\qua_n^l$ infinite),
\begin{equation}
A'_{n+1} =
\begin{cases}
A'_n - 2 k  \stackrel{(\ci)}{+} \mathcal{G}_q & \text{ with probability
$\frac{C_{p-k}Z_{k+1}}{12 C_{p}} + \frac{C_{p-k}}{12 C_{p}} \sum_{i = 1}^k Z_i
Z_{k+1-i}$},\\
A'_n - (2 k + 1) \stackrel{(\ci)}{+} \mathcal{G}_q & \text{ with probability
$\frac{C_{p-k}Z_{k+1}}{12 C_{p}}$}\\
\end{cases}
\end{equation}
(corresponding to $\qua_n^r$ infinite), and
\begin{equation}
A'_{n+1} =
A'_n -  2(k - i) - 1 \stackrel{(\ci)}{+} \mathcal{G}_q  \text{ with probability
$\frac{C_{p-k}}{12C_{p}} Z_i Z_{k+1-i}$ for $1 \leqslant i \leqslant k$}
\end{equation}
(corresponding to $\qua_n^m$ infinite). We can then write
\begin{align*}
E & \left[A'_{n+1} - A'_n \middle| \left| \partial \qua_n \right| = 2p \right]\\
& = \frac{q}{1-q} \left( \frac{C_{p+1}}{12 C_p} + \sum_{k=0}^{p-1} \left( 2 \frac{C_{p-k}Z_{k+1}}{12 C_p} + \sum_{i = 1}^ k \frac{C_{p-k}}{12C_p} Z_i Z_{k+1-i} \right) \right) \\
& \quad \quad + \sum_{k=0}^{p-1} \left( -(2k+1) + \frac{q}{1-q} \right)  \frac{C_{p-k}Z_{k+1}}{12 C_p} \\
& \quad \quad + \sum_{k=0}^{p-1} \left( -2k + \frac{q}{1-q} \right) \left( \frac{C_{p-k}Z_{k+1}}{12 C_p} + \sum_{i = 1}^ k \frac{C_{p-k}}{12C_p} Z_i Z_{k+1-i}\right) \\
& \quad \quad + \sum_{k=1}^{p-1} \sum_{i = 1}^ k \left( - 2(k - i) - 1  + \frac{q}{1-q} \right)  \frac{C_{p-k}}{12C_p} Z_i Z_{k+1-i}\\
& = \frac{q}{1-q} + \sum_{k=0}^{p-1} -k \left(4  \frac{C_{p-k}Z_{k+1}}{12 C_p} + 3 \sum_{i = 1}^ k \frac{C_{p-k}}{12C_p} Z_i Z_{k+1-i}\right) - \sum_{k=0}^{p-1}  \frac{C_{p-k}Z_{k+1}}{12 C_p}\\
& \quad \quad + \sum_{k=1}^{p-1} \sum_{i = 1}^ k \left(2i - (k+1) \right)  \frac{C_{p-k}}{12C_p} Z_i Z_{k+1-i}\\
& = \frac{q}{1-q} + E\left[ X_n \middle| \left| \partial \qua_n \right| = 2p \right] - \frac{C_{p+1}}{12 C_p} -  \sum_{k=0}^{p-1}  \frac{C_{p-k}Z_{k+1}}{12 C_p}\\
& \quad \underset{p \to \infty}{\longrightarrow}  \frac{q}{1-q} - \frac{1}{2},
\end{align*}
which is negative and bounded away from $0$. This implies that percolation does not occur for $q < 1/3$.

\bigskip

To prove that percolation occurs for $q > 1/3$, we can, in the same way as in Section \ref{sec:siteperc}, compute the law of $U_n$ using the fact that $2X_n = (A_{n+1} - A_n) + (U_{n+1} - U_n)$. This yields, conditionally on $|\partial \qua_n | = 2p $:
\begin{itemize}

\item[(1)] When $y_0, y_1 \notin \partial \qua_n$:
\[U_{n+1} = U_n +2  \stackrel{(\ci)}{-} G_q( U_n +2).\]

\item[(2)] When $y_0 \notin \partial \qua_n$ and $y_1 \in
\partial \qua_n$:
\begin{itemize}
\item if $\qua_n^l$ is infinite,
\begin{equation*}
U_{n+1} =
\begin{cases}
U_n - 2k  \stackrel{(\ci)}{-} G_q(U_n -2k) & \text{ (if $2k < U_n$)},\\
0 & \text{ (if $2k \geq U_n$)},
\end{cases}
\end{equation*}

\item if $\qua_n^r$ is infinite,
\begin{equation*}
U_{n+1} =
\begin{cases}
\big[U_n + 1 \big] \stackrel{(\ci)}{-} G_q(U_n + 1) & \text{
(if $2(p - k)  \geq U_n +2$)},\\
\\
2(p - k) & \text{ w.p. $> 0$ (if $2(p - k) < U_n + 2$)},\\
2(p - k) - 1 \stackrel{(\ci)}{-} G_q(2(p - k) - 1) & \text{ w.p. $> 0$ (if $2(p - k) < U_n + 2$).}
\end{cases}
\end{equation*}
\end{itemize}

\item[(3)]When $y_0 \in \partial \qua_n$ and $y_1 \notin
\partial \qua_n$:
\begin{itemize}
\item if $\qua_n^l$ is infinite,
\begin{equation*}
U_{n+1} =
\begin{cases}
U_n - 2k \stackrel{(\ci)}{-} G_q(U_n -2k) & \text{ (if $2k +1  < U_n$)},\\
1 \stackrel{(\ci)}{-} G_q(1) & \text{ (if $2k +1  \geq U_n$)},
\end{cases}
\end{equation*}

\item if $\qua_n^r$ is infinite,
\begin{equation*}
U_{n+1} =
\begin{cases}
U_n  \stackrel{(\ci)}{-} G_q(U_n) & \text{ (if $2(p - k) - 1
\geq U_n$)},\\
\\
2(p - k) & \text{ w.p. $> 0$ (if $2(p - k) - 1 < U_n$)},\\
2(p -k) - 1 \stackrel{(\ci)}{-} G_q(2(p-k)-1) & \text{ w.p. $> 0$ (if $2(p - k) - 1 < U_n$).}
\end{cases}
\end{equation*}
\end{itemize}

\item[(4)] When $y_0, y_1 \in \partial \qua_n$, we obtain
the same transitions as in case (2) when $\qua_n^l$ is infinite, and
as in case (3) when $\qua_n^r$ is infinite. Finally, if $\qua_n^m$ is
infinite, we write $y_0 = x_{2i}$ ($1 \leq i \leq k$), and
\begin{equation*}
U_{n+1} =
\begin{cases}
0 & \text{ (if $2i > U_n$)},\\
\\
\big[U_n - 2i + 1\big] \\
\stackrel{(\ci)}{-} G_q(U_n - 2i +1) &
\text{ (if $2i \leq U_n \leq 2(p-k+i) -2$)},\\
\\
2(p - k) & \text{ w.p. $> 0$ (if $U_n > 2(p-k+i) -2$)},\\
2(p-k) - 1 \stackrel{(\ci)}{-} G_q(2(p-k)-1) & \text{ w.p. $> 0$ (if $U_n > 2(p-k+i) -2$)}.
\end{cases}
\end{equation*}
\end{itemize}

From here, computations are the same as with $A_n$, except that we have to take extra care of the truncated geometric random variables.
We consider the process $(U'_n)$ coupled with $(U_n)$, and with increments given conditionally on $|\partial \qua_n | = 2p$ by
\begin{equation*}
U'_{n+1} =
\begin{cases}
U'_n + 2 \stackrel{(\ci)}{-} G_q(U_n + 2) & \text{ w. p.
$\frac{C_{p+1}}{12 C_p}$},\\
U'_n - 2k \stackrel{(\ci)}{-} G_q(U_n - 2k) & \text{ w. p.
$2 \frac{C_{p-k}Z_{k+1}}{12 C_p} +  \sum_{i = 1}^ k \frac{C_{p-k}}{12C_p} Z_i Z_{k+1-i}$},\\
U'_n + 1 \stackrel{(\ci)}{-} G_q(U_n + 1) & \text{ w. p.
$ \frac{C_{p-k}Z_{k+1}}{12 C_p}$},\\
U'_n \stackrel{(\ci)}{-} G_q(U_n) & \text{ w. p.
$ \frac{C_{p-k}Z_{k+1}}{12 C_p} +  \sum_{i = 1}^ k \frac{C_{p-k}}{12C_p} Z_i Z_{k+1-i}$},\\
U'_n - 2i + 1 \stackrel{(\ci)}{-} G_q(U_n - 2i +1) & \text{ w. p.
$ \frac{C_{p-k}}{12C_p} Z_i Z_{k+1-i}$ for $1 \leq i \leq k$}.
\end{cases}
\end{equation*}
Let us fix $q>1/3$, and $\varepsilon > 0$. We can choose $C>0$ such that $E\left[G_q(C) \right] > \frac{q}{1-q} - \varepsilon$ and $P(X_n > C | | \partial \qua_n | = 2p ) < \varepsilon$. On the event $\{ U_n > 2C \}$, we have
\begin{align*}
E & \left[U'_{n+1} - U'_n \middle| \left| \partial \qua_n \right| = 2p \right]\\
& \leq 2 \frac{C_{p+1}}{12 C_p} - \sum_{k = 0}^{p-1} k \left( 4  \frac{C_{p-k}Z_{k+1}}{12 C_p} + 3 \sum_{i = 1}^ k \frac{C_{p-k}}{12C_p} Z_i Z_{k+1-i} \right) \\
& \quad + \sum_{k = 1}^{p-1} \sum_{i = 1}^ k (k + 1 - 2i) \frac{C_{p-k}}{12C_p} Z_i Z_{k+1-i} + \sum_{k=0}^{p-1} \frac{C_{p-k}Z_{k+1}}{12 C_p} \\
& \quad - \left( \frac{q}{1-q} - \varepsilon \right) \left( \frac{C_{p+1}}{12 C_p} + \sum_{k=0}^{C-1} \left( 4  \frac{C_{p-k}Z_{k+1}}{12 C_p} +  3 \sum_{i = 1}^ k \frac{C_{p-k}}{12C_p} Z_i Z_{k+1-i} \right) \right)\\
& \leq \frac{C_{p+1}}{12 C_p} + E\left[ X_n \middle| | \partial \qua_n | = 2p \right] +  \sum_{k=0}^{p-1} \frac{C_{p-k}Z_{k+1}}{12 C_p}
- \left( \frac{q}{1-q} - \varepsilon \right) \left( 1 - \varepsilon \right)\\
& \underset{p \to \infty}{\longrightarrow} \frac{1}{2} - \left( \frac{q}{1-q} - \varepsilon \right) \left( 1 - \varepsilon \right).
\end{align*}
This shows that for $q > 1/3$, one has $U_n = O(\ln n)$. Hence, $A_n \to \infty$, so that percolation occurs.

\section*{Acknowledgements}

Part of this work was done while P.N. was affiliated with the Courant Institute (New York University), when it was supported in part by the NSF grants OISE-0730136 and DMS-1007626. L.M. would also like to thank the ForschungsInstitut f\"ur Mathematik at ETH for its hospitality.

\end{document}